\newtheorem{thm}{Theorem}[section]
\newtheorem{lem}[thm]{Lemma}
\newtheorem{prop}[thm]{Proposition}
\theoremstyle{definition}
\newtheorem{defn}[thm]{Definition}
\newtheorem{ex}[thm]{Example}
\theoremstyle{remark}
\newtheorem{rem}[thm]{Remark}
\numberwithin{equation}{section}
\title{Operational Calculus for the $n$th-Level Prabhakar Type Fractional Derivative with Applications}
\author{Imtiaz Waheed$^{1, 2}$,  Erkinjon Karimov$^{1}$, Mujeeb ur Rehman$^{2}$ }
\date{}
\address{$^1$ Department of Mathematics, School of Natural Sciences, National University of Sciences and Technology, Islamabad, Pakistan }
\address{$^2$ Ghent Analysis $\&$ PDE Center, Department of Mathematics: Analysis, Logic and Discrete
Mathematics, Ghent University, Krijgslaan 281, 9000 Ghent, Belgium }
 \email{iwaheed.phdmath21sns@student.nust.edu.pk}
 \email{erkinjon.karimov@ugent.be}
\keywords{Prabhakar fractional calculus, operational calculus, Mikusiński operational calculus, $n$th level Prabhakar fractional derivative}
\begin{document}
\begin{abstract}
This study investigates the \textit{n}th-level Prabhakar fractional derivative, a generalization encompassing some well-known fractional derivatives. We establish its fundamental properties, particularly its relationship with the corresponding Prabhakar fractional integral. Furthermore, we develop Mikusiński-type operational calculus for this derivative, providing a framework for solving differential equations involving this operator. To illustrate its application, we present analytical solutions of two problems: a fractional order ordinary differential equation and the time fractional heat equation, both of which include the \textit{n}th-level Prabhakar derivative.
\end{abstract}\maketitle

\section{Introduction}
Fractional calculus (FC) originated from L’Hôpital’s classical query concerning the meaning of a derivative of non-integer order. In response, Leibniz philosophically predicted that such an idea would one day lead to significant applications \cite{oldham2010fractional}. Throughout the twentieth century, the mathematical properties and diverse applications of FC were extensively developed \cite{samko1993fractional,hilfer2000applications}. Fractional differential equations, in particular, have attracted increasing attention due to their successful applications in various fields such as economics \cite{corlay2014multifractional}, fractional kinetics \cite{sokolov2004fractional} and electrodynamics \cite{tarasov2022general}.

Fractional calculus operators exist in numerous forms, many of which can be unified under generalized frameworks. Examples include general fractional calculus \cite{agrawal2012some}, $n$-fold fractional derivatives \cite{luchko2020fractional}, modified Erdélyi–Kober and Prabhakar fractional calculus operators \cite{prabhakar1971singular,kiryakova1993generalized}. Several of these generalizations employ different kernel functions, such as analytic kernels \cite{fernandez2019fractional} or Sonine kernels, which allow for more flexible modeling of nonlocal behaviors.

The origins of Prabhakar fractional calculus can be traced back to Prabhakar’s 1971 paper \cite{prabhakar1971singular}. However, the operator he introduced was only identified as a fractional integral, later in 2004, by Kilbas et al. \cite{kilbas2004generalized}. The concept of Prabhakar's fractional derivative was subsequently introduced in 2014 by Garra et al. \cite{garra2014hilfer}. Since then, the mathematical framework of Prabhakar fractional calculus has been further explored in numerous studies \cite{giusti2020practical,polito2015some}, with applications emerging in areas such as dielectrics, viscoelasticity, and option pricing \cite{tomovski2020applications,colombaro2018storage}. This formulation can be viewed as a generalized class encompassing several other well-known operators in fractional calculus, including both singular and nonsingular types.

To avoid redundant research efforts, first-level generalized fractional derivatives (GFDs) were recently introduced in \cite{luchko20221st} and \cite{luchko20231st}. These first-level GFDs provide a unified framework for both Riemann Liouville type and regularized GFDs, similar to how the Hilfer fractional derivative unifies the Riemann Liouville and Caputo derivatives \cite{hilfer2000applications}. As a result, any findings for first-level GFDs automatically apply to both Riemann Liouville type and regularized GFDs.

The concept of operational calculus based on purely algebraic methods was first introduced in the 1950s by the Polish mathematician Jan Mikusiński ~\cite{mikusinski2014operational,yosida2012operational}. In his framework, the first-order derivative of a continuous function was interpreted as a multiplication operation within a special field of convolution quotients. The Mikusiński approach was later extended to specific cases of the hyper-Bessel differential operator \cite{ditkin1963theory} and subsequently to the general hyper-Bessel operator of order $n$ \cite{dimovski1966operational}.

During the 1990s, Mikusiński type operational calculi were further developed for various fractional derivatives, including the multiple Erdélyi Kober fractional derivative \cite{luchko1994operational}, the Riemann Liouville fractional derivative \cite{hadid1996operational,luchko1995exact}, and the Caputo fractional derivative \cite{luchko1999operational}. An operational calculus for the Hilfer fractional derivative was later proposed in \cite{hilfer2009operational}. Moreover, Mikusiński type operational calculi have been developed for the generalized fractional derivatives (GFDs) \cite{luchko2022fractional} and the regularized GFDs \cite{luchko2021operational,al2022operational}. These frameworks have been successfully applied to obtain closed form solutions for initial value problems involving multiterm fractional differential equations with sequential GFDs and sequential regularized GFDs, respectively.

Motivated by this analogy, we present the notion of the $n$th-level Prabhakar fractional derivative in this study. This generalized operator provides a unified framework in which results remain valid simultaneously for the Riemann Liouville, Caputo, and Hilfer derivatives, as well as for the $n$th level general fractional derivative. The main goals of this paper are given below:

\begin{itemize}
\item[(i)] The $n$th-level Prabhakar fractional derivative is defined as a general operator that unifies and includes some well-known fractional derivatives as particular cases.
\item[(ii)]We establish the relationship between the Prabhakar integral and the $n$th-level Prabhakar fractional derivative and derive fundamental properties of this generalized fractional derivative.
\item[(iii)] We develop the operational calculus structure associated with the $n$th level Prabhakar fractional derivative, providing a framework for solving differential equations involving this operator.
\item[(iv)] Using the operational calculus technique, we obtain explicit solutions to a Cauchy type initial value problem and a time-fractional heat equation that incorporates the $n$th level Prabhakar fractional derivative.
\end{itemize}

The remainder of this paper is organized as follows. Section~\ref{S2} presents the necessary preliminaries and key mathematical results used throughout the paper. Section~\ref{S3} introduces the definition and fundamental properties of the proposed $n$th-level Prabhakar fractional derivative. In Section~\ref{S4}, we develop the framework of the Mikusiński operational calculus for the $n$th-level Prabhakar fractional derivative. Finally, Section~\ref{S5} discusses some applications.

\section{Preliminaries}\label{S2}
\subsection{Fractional integrals and derivatives}
In this section, we present the fractional integrals and derivatives, collectively referred to as differintegrals, and some basic results.
\begin{defn}\textup{\cite{samko1993fractional,diethelm2010analysis}}
For $\upalpha \in \mathbb{C}$, with $\operatorname{Re}(\upalpha) > 0$, the Riemann Liouville fractional integral of order $\upalpha$ for a suitable function $f:(0,\infty)\to\mathbb{C}$ is given by:
\begin{equation}
{}_{\:0}^{\textup{R}} \mathbb{I}_x^\upalpha f(x)
= \int_0^x (x-t)^{\upalpha-1} f(t) \mathrm{d}t,
\quad t \in (0,\infty).
\end{equation}
\end{defn}
\begin{defn}\textup{\cite{samko1993fractional,caputo1967linear,dzherbashian2020fractional}}
Let $\upalpha \in \mathbb{C}$ with $\operatorname{Re}(\upalpha) \ge 0$. The Riemann Liouville and Caputo fractional derivatives of order $\upalpha$ are defined, respectively, as:
\begin{align}
{}_{\:0}^{\textup{R}} \mathbb{D}_x^\upalpha f(x)
&= \frac{\mathrm{d}^n}{\mathrm{d}x^n}
~{}_{\:0}^{\textup{R}} \mathbb{I}_x^{n-\upalpha} ~f(t),\\
{}_{\:0}^{\textup{C}} \mathbb{D}_x^\upalpha ~f(x)&= {}_{\:0}^{\textup{R}} \mathbb{I}_x^{n-\upalpha}
\frac{\mathrm{d}^n }{\mathrm{d}x^n}~f(t),
\end{align}
where $n = \lfloor \operatorname{Re}\upalpha \rfloor + 1 \in \mathbb{N}$.    
\end{defn}
Both definitions constitute natural extensions of the Riemann Liouville integral. Specifically, the Riemann Liouville derivative is obtained by analytical continuation of the Riemann Liouville integral to negative orders, following the convention
\begin{equation}\label{dertoint}
{}_{\:0}^\text{R} \mathbb{I}_x^{-\upalpha} = {}_{\:0}^\text{R} \mathbb{D}_x^\upalpha,
\end{equation}
that allows extension to all $\upalpha \in \mathbb{C}$.
In contrast, while Riemann Liouville fractional differential equations require fractional initial conditions, the Caputo derivative is often preferred in applications since it involves only classical integer order initial conditions \cite{diethelm2010analysis}.
\begin{lem}\textup{\cite{kilbas2006theory,samko1993fractional}}
Riemann Liouville differintegrals have natural composition properties both when the inner operator is a fractional integral and when the outer operator is an ordinary derivative:
\begin{align}\label{rlsemigroup}
{ }_{\:0}^{\textup{R}} \mathbb{I}_x^\upalpha \circ{ }_{\:0}^\textup{R} \mathbb{I}_x^\beta & ={ }_{\:0}^\textup{R} \mathbb{I}_x^{\upalpha+\beta}, \\
\frac{\mathrm{d}^n}{\mathrm{~d} x^n} \circ { }_{\:0}^\textup{R} \mathbb{D}_x^\upalpha & = { }_{\:0}^\textup{R} \mathbb{D}_x^{n+\upalpha}, \quad n \in \mathbb{N}, \quad \upalpha \in \mathbb{C},
\end{align}
where we follow the convention \eqref{dertoint} so that the operator of order $\upalpha$ in each case may be either a fractional integral or a fractional derivative, according to the sign of $\upalpha$.

There is also a modified composition property for fractional derivatives, which includes a finite sum of initial value terms:
\begin{align}\label{inttermprop}
& { }_{\:0}^\textup{R} \mathbb{D}_x^\upalpha ~{ }_{\:0}^\textup{R} \mathbb{D}_x^\beta f(x) = { }_{\:0}^\textup{R}\mathbb{D}_x^{\upalpha+\beta} f(t)-\sum_{k=0}^{m-1} \frac{x^{-\upalpha-k-1}}{\Gamma(-\upalpha-k)} ~\left[{ }_{\:0}^\textup{R} \mathbb{D}_x^{\beta-k-1} f(x)\right]_{x=0},
\end{align}
where $m=\lfloor\operatorname{Re} \beta\rfloor+1$. 
\end{lem}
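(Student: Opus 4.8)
The first two composition rules are routine and I would dispatch them quickly. For the semigroup law ${}_{\:0}^{\textup{R}}\mathbb{I}_x^{\upalpha}\circ {}_{\:0}^{\textup{R}}\mathbb{I}_x^{\beta}={}_{\:0}^{\textup{R}}\mathbb{I}_x^{\upalpha+\beta}$, I would insert the integral definitions, interchange the order of integration over the simplex $0<s<t<x$ by Fubini (justified for $\operatorname{Re}\upalpha,\operatorname{Re}\beta>0$ and locally integrable $f$), and evaluate the inner integral by the substitution $t=s+(x-s)u$, which turns it into a Beta integral $B(\beta,\upalpha)=\Gamma(\upalpha)\Gamma(\beta)/\Gamma(\upalpha+\beta)$; the normalizing constants then collapse to give order $\upalpha+\beta$. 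For $\frac{d^n}{dx^n}\circ {}_{\:0}^{\textup{R}}\mathbb{D}_x^{\upalpha}={}_{\:0}^{\textup{R}}\mathbb{D}_x^{n+\upalpha}$, I would simply expand ${}_{\:0}^{\textup{R}}\mathbb{D}_x^{\upalpha}=\frac{d^N}{dx^N}{}_{\:0}^{\textup{R}}\mathbb{I}_x^{N-\upalpha}$ with $N=\lfloor\operatorname{Re}\upalpha\rfloor+1$ and observe that the ceiling index attached to order $n+\upalpha$ is exactly $n+N$, so both sides equal $\frac{d^{n+N}}{dx^{n+N}}{}_{\:0}^{\textup{R}}\mathbb{I}_x^{N-\upalpha}$; the convention \eqref{dertoint} covers the case where $N-\upalpha$ would be negative.

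The substance of the lemma is the corrected composition \eqref{inttermprop}, and here the plan is to reduce everything to a single-step commutation identity and then iterate. Writing $m=\lfloor\operatorname{Re}\beta\rfloor+1$ and $h={}_{\:0}^{\textup{R}}\mathbb{I}_x^{m-\beta}f$, so that ${}_{\:0}^{\textup{R}}\mathbb{D}_x^{\beta}f=h^{(m)}$, the formula to prove becomes a statement about ${}_{\:0}^{\textup{R}}\mathbb{D}_x^{\upalpha}h^{(m)}$. The engine of the proof is the claim that for smooth enough $\phi$,
\[
{}_{\:0}^{\textup{R}}\mathbb{D}_x^{\upalpha}\phi'(x)={}_{\:0}^{\textup{R}}\mathbb{D}_x^{\upalpha+1}\phi(x)-\frac{x^{-\upalpha-1}}{\Gamma(-\upalpha)}\phi(0).
\]
Granting this, I would apply it $m$ times, peeling off one derivative at each stage: the $k$th application (for $k=0,\dots,m-1$) turns ${}_{\:0}^{\textup{R}}\mathbb{D}_x^{\upalpha+k}h^{(m-k)}$ into ${}_{\:0}^{\textup{R}}\mathbb{D}_x^{\upalpha+k+1}h^{(m-k-1)}$ and emits the boundary term $-\frac{x^{-\upalpha-k-1}}{\Gamma(-\upalpha-k)}h^{(m-k-1)}(0)$, exactly matching the summand in \eqref{inttermprop} once I recognize $h^{(m-k-1)}(0)=[{}_{\:0}^{\textup{R}}\mathbb{D}_x^{\beta-k-1}f]_{x=0}$ (which follows by unwinding the definition of the RL derivative of order $\beta-k-1$, whose ceiling index is $m-k-1$). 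The surviving main term after $m$ steps is ${}_{\:0}^{\textup{R}}\mathbb{D}_x^{\upalpha+m}h$, and by the clean, correction-free composition of a derivative with an inner fractional integral — the convention-extended first line of \eqref{rlsemigroup} — this equals ${}_{\:0}^{\textup{R}}\mathbb{D}_x^{(\upalpha+m)-(m-\beta)}f={}_{\:0}^{\textup{R}}\mathbb{D}_x^{\upalpha+\beta}f$.

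It remains to justify the single-step identity, and this is where the only genuine delicacy lies. Expanding ${}_{\:0}^{\textup{R}}\mathbb{D}_x^{\upalpha}=\frac{d^N}{dx^N}{}_{\:0}^{\textup{R}}\mathbb{I}_x^{N-\upalpha}$, the whole thing rests on commuting a fractional integral with one ordinary derivative, namely ${}_{\:0}^{\textup{R}}\mathbb{I}_x^{\gamma}\phi'=\frac{d}{dx}{}_{\:0}^{\textup{R}}\mathbb{I}_x^{\gamma}\phi-\frac{x^{\gamma-1}}{\Gamma(\gamma)}\phi(0)$. A direct integration by parts on $\int_0^x(x-t)^{\gamma-1}\phi'(t)\,dt$ is hazardous, since for $\operatorname{Re}\gamma\in(0,1)$ the factor $(x-t)^{\gamma-1}$ is singular at the upper endpoint; I would circumvent this by first raising the order, integrating by parts in ${}_{\:0}^{\textup{R}}\mathbb{I}_x^{\gamma+1}\phi'$ (where the endpoint term now vanishes because $\operatorname{Re}\gamma>0$) to obtain ${}_{\:0}^{\textup{R}}\mathbb{I}_x^{\gamma+1}\phi'={}_{\:0}^{\textup{R}}\mathbb{I}_x^{\gamma}\phi-\frac{x^{\gamma}}{\Gamma(\gamma+1)}\phi(0)$, and then differentiating once. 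Substituting this commutation into the expanded operator and using the power rule $\frac{d^N}{dx^N}x^{N-\upalpha-1}=\frac{\Gamma(N-\upalpha)}{\Gamma(-\upalpha)}x^{-\upalpha-1}$ to simplify the boundary contribution yields the identity, the leftover main term reassembling as ${}_{\:0}^{\textup{R}}\mathbb{D}_x^{\upalpha+1}\phi$ because the ceiling index for order $\upalpha+1$ is $N+1$. The main obstacles are therefore bookkeeping ones — keeping the ceiling indices $N$ and $m$ aligned through the iteration and invoking the convention \eqref{dertoint} consistently whenever an ``order'' turns negative — together with stating the regularity hypotheses on $f$ (enough smoothness and integrability for Fubini, for the endpoint limits, and for the existence of the traces $h^{(m-k-1)}(0)$) under which all manipulations are legitimate.
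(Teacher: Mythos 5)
This lemma is quoted in the paper as a known preliminary, attributed to \cite{kilbas2006theory,samko1993fractional}; the paper itself supplies no proof, so there is nothing internal to compare against. Judged on its own, your argument is correct and is essentially the standard textbook proof from those references: Fubini plus the Beta integral for the semigroup law, index bookkeeping ($\lfloor\operatorname{Re}(n+\upalpha)\rfloor+1 = n+N$) for the composition with $\mathrm{d}^n/\mathrm{d}x^n$, and, for \eqref{inttermprop}, the reduction to the one-step identity
${}_{\:0}^{\textup{R}}\mathbb{D}_x^{\upalpha}\phi' = {}_{\:0}^{\textup{R}}\mathbb{D}_x^{\upalpha+1}\phi - \frac{x^{-\upalpha-1}}{\Gamma(-\upalpha)}\phi(0)$
iterated $m$ times with $\phi$ running through the derivatives of $h={}_{\:0}^{\textup{R}}\mathbb{I}_x^{m-\beta}f$. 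I checked the iteration: the $k$th step emits exactly $-\frac{x^{-\upalpha-k-1}}{\Gamma(-\upalpha-k)}h^{(m-k-1)}(0)$, the identification $h^{(m-k-1)}(0)=[{}_{\:0}^{\textup{R}}\mathbb{D}_x^{\beta-k-1}f]_{x=0}$ is right because the ceiling index of $\beta-k-1$ is $m-k-1$, and your trick of raising the order before integrating by parts correctly avoids the endpoint singularity.

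Two small caveats, neither fatal. First, your semigroup computation (and indeed the lemma) presupposes the normalized kernel $(x-t)^{\upalpha-1}/\Gamma(\upalpha)$; the paper's displayed definition of ${}_{\:0}^{\textup{R}}\mathbb{I}_x^{\upalpha}$ omits the $1/\Gamma(\upalpha)$ factor (evidently a typo), and without it the Beta factor would not collapse. Second, the final step of your iteration invokes ${}_{\:0}^{\textup{R}}\mathbb{D}_x^{\upalpha+m}\,{}_{\:0}^{\textup{R}}\mathbb{I}_x^{m-\beta}={}_{\:0}^{\textup{R}}\mathbb{D}_x^{\upalpha+\beta}$, which is the convention-extended form of \eqref{rlsemigroup} with a genuine derivative outside; this is not literally what your Fubini argument proves and deserves its own one-line justification (write ${}_{\:0}^{\textup{R}}\mathbb{D}_x^{\upalpha+m}=\frac{\mathrm{d}^{M}}{\mathrm{d}x^{M}}{}_{\:0}^{\textup{R}}\mathbb{I}_x^{M-\upalpha-m}$, merge the two inner integrals by the pure-integral semigroup law, and cancel surplus derivatives against surplus integrals by the fundamental theorem of calculus). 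With those points made explicit, the proof is complete and matches the cited literature.
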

Now, here we present the fractional integral and derivative known as the Prabhakar fractional integral. Before doing so, we first recall the definitions of the Prabhakar function, also called the three parameter Mittag Leffler function, introduced by T. R. Prabhakar in 1971 \cite{prabhakar1971singular}, is defined as
\begin{equation}\label{ml3}
E^{\gamma}_{\upalpha,\beta}(z) = \sum_{k=0}^{\infty} \frac{(\gamma)_k  z^k}{k!\,\Gamma(\upalpha k + \beta)}, 
\quad \upalpha, \beta, \gamma \in \mathbb{C},\ \operatorname{Re}(\upalpha) > 0,\ z \in \mathbb{C},
\end{equation}
where $(\gamma)_k \equiv \Gamma(\gamma + k)/\Gamma(\gamma)$ is the Pochhammer symbol and $\Gamma(\cdot)$ denotes the Euler gamma function. Note that $E^{\gamma}_{\upalpha,\beta}(z)$ is an entire function of order $\rho = 1/\operatorname{Re}(\upalpha)$.
\begin{defn}\textup{\cite{prabhakar1971singular}}
Let $f \in L^1[a, b], a < b$, then, the  left, and right sided Prabhakar integral operators are defined as
\begin{align}\label{pint}
&{}_{a}{\mathbb{E}}^{\upalpha,\beta,\gamma,\delta}_{x} f(x)
= \int_{a}^{x} (x - \xi)^{\beta - 1} \,
E^{\gamma}_{\upalpha,\beta}\!\left( \delta (x - \xi)^{\upalpha} \right) 
\, f(\xi)\, d\xi, 
\qquad x > a, \\
&{}_{b}{\mathbb{E}}^{\upalpha,\beta,\gamma,\delta}_{x} f(x)
= \int_{x}^{b} (\xi - x)^{\beta - 1} \,
E^{\gamma}_{\upalpha,\beta}\!\left( \delta (\xi - x )^{\upalpha} \right) 
\, f(\xi)\, d\xi, 
\qquad x < b. \label{rpfi}
\end{align}
where the condition $\upalpha > 0$ is necessary for the convergence of the series expansion \eqref{ml3}, whereas the restriction $\beta > 0$ is imposed in order to guarantee the convergence of the integral.
\end{defn}
This integral operator was originally proposed in \cite{prabhakar1971singular}, which is nowadays recognized as the Prabhakar fractional integral. Throughout this article, we denote the left sided Prabhakar fractional integral by ${}_{a}{\mathbb{E}}_{x}^{\upalpha,\beta,\gamma,\delta}$, and the Prabhakar function (or the three parameter Mittag Leffler function) by $E^{\gamma}_{\upalpha,\beta}$.

Equation \eqref{pint} reduces to the Riemann Liouvile integral for $\gamma = 0$ or $\delta = 0$. The Prabhakar fractional derivatives of Riemann–Liouville (RL) type, Caputo type, and Hilfer type are given by.
\begin{defn}\textup{\cite{garra2014hilfer,tomovski2020applications}}
For any $\upalpha, \beta, \gamma, \delta \in \mathbb{C}$ with $\text{Re}(\upalpha) > 0$
and $\text{Re}(\beta) \geq 0,a \geq 0$, the left sided Prabhakar fractional derivatives of 
Riemann Liouville (RL) type, Caputo type, and Hilfer type are defined, 
respectively, as follows, each acting on a suitable function 
$f : (0,\infty) \to \mathbb{C}$:
\begin{align*}
&{}_{\:\:\; a}^{\textup{PR}}\mathbb{D}^{\upalpha,\beta,\gamma,\delta}_{x} f(x) 
= \mathbb{D}^n ~ {}_{a}\mathbb{E}^{\upalpha,n-\beta,-\gamma,\delta}_{x} ~f(x),\\
&{}_{\:\:\; a}^{\textup{PC}}\mathbb{D}^{\upalpha,\beta,\gamma,\delta}_{x} f(x) 
= {}_{a}\mathbb{E}^{\upalpha, n-\beta, -\gamma, \delta}_{x} ~ \mathbb{D}^n  ~f(x),\\
&{}_{\:\:\; a}^{\textup{PH}}\mathbb{D}^{\upalpha,\beta,\gamma,\delta:\theta}_{x} f(x)
= {}_{a}\mathbb{E}^{\upalpha,\theta(n-\beta),-\gamma\theta,\delta}_{x} ~\mathbb{D}^n ~
{}_{a}\mathbb{E}^{\upalpha,(1-\theta)(n-\beta),-\gamma(1-\theta),\delta}_{x}   
f(x),
\end{align*}
where $n = \lfloor \text{Re}(\beta) \rfloor + 1 \in \mathbb{N}$ in all three cases, and $0 \leq \theta \leq 1$ in the last case. Similarly, the right-sided case can be defined by using the right-sided Prabhakar fractional integral given in equation \eqref{rpfi}. For further details on the left- and right sided versions of these operators, we refer the reader to \textup{\cite{anastassiou2021foundations}}.
\end{defn}
\begin{lem}\textup{\cite{kilbas2004generalized,prabhakar1971singular}}\label{semigrouprop}
The left sided Prabhakar integral has a semigroup property in its second and third parameters:
\begin{equation*}
{}_{a}\mathbb{E}_x^{\upalpha, \beta_1, \gamma_1, \delta} \circ {}_{a}\mathbb{E}_x^{\upalpha, \beta_2, \gamma_2, \delta} =  {}_{a}\mathbb{E}_x^{\upalpha, \beta_1+\beta_2, \gamma_1+\gamma_2, \delta},   
\end{equation*}
where $\operatorname{Re} (\upalpha)>0$, $\operatorname{Re} (\beta_1) > 0$, and $\operatorname{Re} (\beta_2) > 0$. As a consequence of this, we have
\begin{equation*}
{}_{a}\mathbb{E}_x^{\upalpha, \beta, \gamma, \delta} \circ {}_{a}\mathbb{E}_x^{\upalpha, \beta, \gamma, \delta} \circ \cdots \circ {}_{a}\mathbb{E}_x^{\upalpha, \beta, \gamma, \delta}= {}_{a}\mathbb{E}_x^{\upalpha, n \beta, n \gamma, \delta},   
\end{equation*}
where $\operatorname{Re} (\upalpha) > 0, a \geq 0$ and $\operatorname{Re} (\beta) > 0$ and $n \in \mathbb{N}$.
\end{lem}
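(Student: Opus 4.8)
The plan is to recognize each Prabhakar integral as a Laplace-type convolution against an explicit kernel, reduce the operator identity to a single identity between those kernels, and then verify the kernel identity by a termwise computation. Without loss of generality I take $a=0$ (the general case follows from the substitution $\xi\mapsto\xi-a$). Writing
\[
\kappa_{\beta,\gamma}(x):=x^{\beta-1}E^{\gamma}_{\upalpha,\beta}\!\left(\delta x^{\upalpha}\right),
\]
definition \eqref{pint} reads ${}_{0}\mathbb{E}_x^{\upalpha,\beta,\gamma,\delta}f=\kappa_{\beta,\gamma}*f$, where $(g*f)(x)=\int_0^x g(x-\xi)f(\xi)\,d\xi$ is the Laplace convolution. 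Since this convolution is associative,
\[
{}_{0}\mathbb{E}_x^{\upalpha,\beta_1,\gamma_1,\delta}\bigl({}_{0}\mathbb{E}_x^{\upalpha,\beta_2,\gamma_2,\delta}f\bigr)=\bigl(\kappa_{\beta_1,\gamma_1}*\kappa_{\beta_2,\gamma_2}\bigr)*f,
\]
so the whole lemma collapses to the kernel identity $\kappa_{\beta_1,\gamma_1}*\kappa_{\beta_2,\gamma_2}=\kappa_{\beta_1+\beta_2,\gamma_1+\gamma_2}$.

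For the kernel identity I would expand both factors via the defining series \eqref{ml3}, giving $\kappa_{\beta_i,\gamma_i}(x)=\sum_{k\ge 0}\frac{(\gamma_i)_k\,\delta^k}{k!\,\Gamma(\upalpha k+\beta_i)}\,x^{\upalpha k+\beta_i-1}$. Because $E^{\gamma}_{\upalpha,\beta}$ is entire of order $1/\operatorname{Re}(\upalpha)$, both series converge absolutely and uniformly on compact sets, which justifies interchanging the two summations with the convolution integral. Each resulting term is the convolution of two pure powers, evaluated by the Beta integral
\[
\int_0^x (x-\xi)^{p-1}\xi^{q-1}\,d\xi=\frac{\Gamma(p)\Gamma(q)}{\Gamma(p+q)}\,x^{p+q-1},\qquad \operatorname{Re}(p),\operatorname{Re}(q)>0,
\]
whose hypotheses hold here since $\operatorname{Re}(\beta_1),\operatorname{Re}(\beta_2)>0$. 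Taking $p=\upalpha m+\beta_1$ and $q=\upalpha n+\beta_2$, the two factors $\Gamma(\upalpha m+\beta_1)\Gamma(\upalpha n+\beta_2)$ produced by the series cancel exactly against the numerator of the Beta value, leaving $\frac{(\gamma_1)_m(\gamma_2)_n\,\delta^{m+n}}{m!\,n!\,\Gamma(\upalpha(m+n)+\beta_1+\beta_2)}\,x^{\upalpha(m+n)+\beta_1+\beta_2-1}$ as the generic term of a clean double sum in $m,n$.

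The final step, which I expect to be the crux, is the resummation. Setting $k=m+n$ and grouping, the $x^{\upalpha k+\beta_1+\beta_2-1}$ term carries the factor $\frac{1}{k!}\sum_{j=0}^{k}\binom{k}{j}(\gamma_1)_j(\gamma_2)_{k-j}$; by the Vandermonde-type convolution for Pochhammer symbols (equivalently, comparing the coefficients of $z^k$ in $(1-z)^{-\gamma_1}(1-z)^{-\gamma_2}=(1-z)^{-(\gamma_1+\gamma_2)}$) this bracket equals $(\gamma_1+\gamma_2)_k$, and the series reassembles precisely into $\kappa_{\beta_1+\beta_2,\gamma_1+\gamma_2}$. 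A shorter alternative avoids the combinatorics: the Laplace transform $\mathcal{L}[\kappa_{\beta,\gamma}](s)=s^{\upalpha\gamma-\beta}(s^{\upalpha}-\delta)^{-\gamma}$ converts the convolution into a product of transforms whose product is visibly $\mathcal{L}[\kappa_{\beta_1+\beta_2,\gamma_1+\gamma_2}](s)$, so uniqueness of the Laplace transform yields the kernel identity directly. Finally, the displayed consequence — the $n$-fold composition equalling ${}_{a}\mathbb{E}_x^{\upalpha,n\beta,n\gamma,\delta}$ — follows by an immediate induction on $n$, applying the two-factor identity repeatedly with the parameters $\beta$ and $\gamma$ held fixed.
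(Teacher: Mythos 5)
Your proposal is correct. Note, however, that the paper does not prove this lemma at all: it is imported verbatim from the cited sources (Prabhakar 1971; Kilbas et al. 2004), so there is no in-paper argument to compare against. Your proof is a sound, self-contained verification: the reduction to the kernel identity $\kappa_{\beta_1,\gamma_1}*\kappa_{\beta_2,\gamma_2}=\kappa_{\beta_1+\beta_2,\gamma_1+\gamma_2}$ via associativity of the Laplace convolution is exactly the right structural move, the Beta-integral computation of the termwise convolutions is valid under $\operatorname{Re}(\beta_1),\operatorname{Re}(\beta_2)>0$, and the resummation with $\sum_{j=0}^{k}\binom{k}{j}(\gamma_1)_j(\gamma_2)_{k-j}=(\gamma_1+\gamma_2)_k$ is the correct Pochhammer--Vandermonde step. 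It is worth pointing out that this series-plus-Chu--Vandermonde mechanism is precisely the technique the paper itself deploys later (in the proofs of its Theorems 3.2 and 3.4, via Lemma \ref{Seriesformulae}), so your argument fits the paper's toolkit seamlessly; meanwhile your Laplace-transform shortcut, using $\mathcal{L}\bigl[x^{\beta-1}E^{\gamma}_{\upalpha,\beta}(\delta x^{\upalpha})\bigr](s)=s^{\upalpha\gamma-\beta}(s^{\upalpha}-\delta)^{-\gamma}$, is essentially the classical route taken in the cited references, with the one caveat that it only applies directly when the transforms exist and $|\delta s^{-\upalpha}|<1$ on some half-plane, after which one extends by analytic continuation or by uniqueness of the transform on $L^1_{\mathrm{loc}}$ functions of exponential order. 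Either route suffices; the series argument has the advantage of working pointwise for all $x>a$ without any growth or transform-existence hypotheses.
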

Moreover, an additional noteworthy connection between the Riemann Liouville and Prabhakar integrals arises through their series representations. Specifically, for $f \in L^1[0, T]$, the parameters $\upalpha, \beta > 0$. 
\begin{lem}\textup{\cite{fernandez2019series}}\label{Seriesformulae}
The left sided Prabhakar integral and left sided Prabhakar derivative of RL type have the following locally uniformly convergent series formulae:
\begin{align*}
&{}_{a}\mathbb{E}_x^{\upalpha, \beta, \gamma, \delta} f(x) =\sum_{k=0}^{\infty} \frac{(\gamma)_k ~\delta^k}{k!}~{ }_{\:a}^{\textup{R}} \mathbb{I}_x^{\upalpha k+\beta} f(x), \\
&{ }{ }_{\:\:\:a}^{\textup{PR}} \mathbb{D}_x^{\upalpha, \beta, \gamma, \delta} f(x) =\sum_{k=0}^{\infty} \frac{(-\gamma)_k~ \delta^k}{k!}~{ }_{\:a}^{\textup{R}}\mathbb{I}_x^{\upalpha k-\beta} f(x),
\end{align*}
in both cases for functions $f:(0, \infty) \rightarrow \mathbb{C}$, for any $\upalpha, \beta, \gamma, \delta \in \mathbb{C}$ with $\operatorname{Re} (\upalpha) > 0, a \geq 0$ and $\operatorname{Re} (\beta) > 0$ in the first case or $\operatorname{Re} (\beta) \geq 0$ in the second case.
\end{lem}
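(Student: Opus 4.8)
The plan is to derive both identities by inserting the power-series definition \eqref{ml3} of the Prabhakar function into the relevant integral operator and then justifying the interchange of summation with integration (and, in the second case, with differentiation).

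For the first identity I would start from the definition \eqref{pint}, replace $E^\gamma_{\upalpha,\beta}\!\left(\delta(x-\xi)^\upalpha\right)$ by its series, and absorb the prefactor $(x-\xi)^{\beta-1}$, so that the kernel becomes $\sum_{k=0}^\infty \frac{(\gamma)_k\delta^k}{k!\,\Gamma(\upalpha k+\beta)}(x-\xi)^{\upalpha k+\beta-1}$. Interchanging this sum with the $\xi$-integral and identifying each summand as the Riemann--Liouville integral ${}_{\:a}^{\textup{R}}\mathbb{I}_x^{\upalpha k+\beta}f(x)$---with the coefficient $1/\Gamma(\upalpha k+\beta)$ from the Mittag-Leffler series furnishing exactly the Riemann--Liouville normalisation---then yields $\sum_k \frac{(\gamma)_k\delta^k}{k!}\,{}_{\:a}^{\textup{R}}\mathbb{I}_x^{\upalpha k+\beta}f(x)$, as claimed.

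The justification of this interchange is the main technical point. Since $E^\gamma_{\upalpha,\beta}$ is entire of order $1/\operatorname{Re}(\upalpha)$, its defining series converges locally uniformly; on a compact interval $[a,X]$ the argument $\delta(x-\xi)^\upalpha$ stays bounded for $\xi\in[a,x]$, so the kernel series converges uniformly in $\xi$, and together with $f\in L^1[a,b]$ this permits the swap by dominated convergence (equivalently, a Weierstrass $M$-test). The same estimate shows that the resulting series of Riemann--Liouville integrals converges locally uniformly in $x$, because the factor $\Gamma(\upalpha k+\beta)^{-1}$ decays super-exponentially in $k$ while $(\gamma)_k/k!$ grows only subexponentially; this delivers the local uniform convergence asserted in the statement.

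For the second identity I would use the definition ${}_{\:\:\;a}^{\textup{PR}}\mathbb{D}^{\upalpha,\beta,\gamma,\delta}_x=\mathbb{D}^n\,{}_a\mathbb{E}^{\upalpha,n-\beta,-\gamma,\delta}_x$, with $n=\lfloor\operatorname{Re}\beta\rfloor+1$, and apply the first identity to the inner integral (with the substitutions $\beta\mapsto n-\beta$ and $\gamma\mapsto-\gamma$, noting that $\operatorname{Re}(n-\beta)>0$ even when $\operatorname{Re}(\beta)=0$) to obtain $\sum_k\frac{(-\gamma)_k\delta^k}{k!}\,{}_{\:a}^{\textup{R}}\mathbb{I}_x^{\upalpha k+n-\beta}f(x)$. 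Differentiating $n$ times term by term and invoking the composition rule $\mathbb{D}^n\circ{}_{\:a}^{\textup{R}}\mathbb{I}_x^{\mu}={}_{\:a}^{\textup{R}}\mathbb{I}_x^{\mu-n}$---which follows from the second line of \eqref{rlsemigroup} together with the convention \eqref{dertoint}---lowers each exponent from $\upalpha k+n-\beta$ to $\upalpha k-\beta$ and produces the stated series. The delicate point is that term-by-term differentiation is legitimate only when the differentiated series converges locally uniformly; I expect this to be the principal obstacle, and I would resolve it exactly as before, using the super-exponential decay of $\Gamma(\upalpha k-\beta)^{-1}$ to dominate $(-\gamma)_k\delta^k/k!$, which simultaneously legitimizes the interchange of $\mathbb{D}^n$ with the sum and secures the local uniform convergence claimed for the derivative series.
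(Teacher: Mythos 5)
The paper gives no proof of this lemma at all: it is quoted as a known preliminary directly from \cite{fernandez2019series}, so there is nothing internal to compare against. Your reconstruction is nonetheless correct, and it is essentially the argument of that cited reference: insert the defining series \eqref{ml3} into \eqref{pint} and interchange sum and integral; then, for the RL-type derivative, write it as $\mathbb{D}^n$ composed with the Prabhakar integral of parameters $(n-\beta,-\gamma)$, apply the first formula, and differentiate term by term using $\mathbb{D}^n\circ{}_{\:a}^{\textup{R}}\mathbb{I}_x^{\mu}={}_{\:a}^{\textup{R}}\mathbb{I}_x^{\mu-n}$ under the convention \eqref{dertoint}, with all interchanges controlled by the reciprocal-Gamma decay. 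Two points of precision are worth flagging, though neither is a gap in the approach. First, your claim that the absorbed kernel series $\sum_{k}\frac{(\gamma)_k\delta^k}{k!\,\Gamma(\upalpha k+\beta)}(x-\xi)^{\upalpha k+\beta-1}$ converges uniformly in $\xi$ on $[a,x]$ is not literally true near $\xi=x$ when $\operatorname{Re}\beta<1$, since every term is unbounded there; the clean justification is that the entire-function factor $E^{\gamma}_{\upalpha,\beta}\bigl(\delta(x-\xi)^{\upalpha}\bigr)$ converges uniformly while the prefactor $(x-\xi)^{\beta-1}$ is integrable against $|f|$, so Fubini--Tonelli (or dominated convergence applied to the partial sums) performs the swap. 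Second, iterating term-by-term differentiation $n$ times requires locally uniform convergence of each intermediate series $\sum_{k}\frac{(-\gamma)_k\delta^k}{k!}\,{}_{\:a}^{\textup{R}}\mathbb{I}_x^{\upalpha k+n-j-\beta}f$ for $1\le j\le n$, not only of the final one; your Gamma-decay estimate supplies this, but it should be stated, and for the finitely many indices $k$ with $\operatorname{Re}(\upalpha k-\beta)<0$ the termwise limits are genuine RL derivatives whose existence is part of the ``suitable function'' hypothesis. Note also that your identification of $1/\Gamma(\upalpha k+\beta)$ as exactly the RL normalisation presupposes the standard definition of ${}_{\:a}^{\textup{R}}\mathbb{I}_x^{\mu}$ with the $1/\Gamma(\mu)$ factor, which the lemma indeed requires; the paper's own Definition 2.1 omits that factor, evidently by typographical error.
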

\subsection{Function Spaces}
In this subsection, we define the functional spaces that will be used in formulating our main results. The use of appropriately chosen spaces (such as $ C_{\upalpha} $) allows the formulation of fractional differential and integral operators, ensuring that they are well defined, closed under the associated norms, and exhibit the required mapping properties within the operational calculus framework.
\begin{defn}\textup{\cite{diethelm2010analysis}}
Let $0 < \upalpha < 1$. The function space $X_0$ is defined by
\begin{equation*}
X_0 = \left\{\, f \in L^{1}(0,1) \;\Big|\;
{}{ }_{\:a}^{\mathrm{R}}\mathbb{I}_{x}^{1-\upalpha} f(x)
= {}{ }_{\:a}^{\mathrm{R}}\mathbb{I}_{x}^{1-\upalpha}\, {}{ }_{\:a}^{\mathrm{R}}\mathbb{I}_{x}^{\upalpha} \varphi(x)
= {}{ }_{\:a}^{\mathrm{R}}\mathbb{I}_{x}^{1} \varphi(x)
\right\}    
\end{equation*}
for some $ \varphi \in L^{1}(0,1)$.
\end{defn}
\begin{defn}\textup{\cite{diethelm2010analysis}}
Let $AC([0,1])$ denote the space of absolutely continuous functions on $[0,1]$. A function $f$ belongs to this space if and only if there exists a function 
$\varphi \in L^{1}(0,1)$ such that
\begin{equation*}
f(x) = f(0) + \int_{0}^{x} \varphi(t)\,dt, 
\qquad x \in [0,1].    
\end{equation*}
That is,
\begin{equation*}
f \in AC([0,1])
\;\Longleftrightarrow\;
\exists\,\,\, \varphi \in L^{1}(0,1)
\ \text{such that}\ 
f(x) = f(0) + \int_{0}^{x} \varphi(t)\,dt.    
\end{equation*}
\end{defn}
For further details, we refer the reader to \textup{\cite{luchko2020fractional}}.
Next, we discuss the function space $C_{\mu}$ and its related subspaces, which are essential for the formulation of the Mikusiński structure.
\begin{defn}\textup{\cite{dimovski1966operational}}
For any $\mu \in \mathbb{R}$, the function space $C_{\mu}$ is defined as the set of all functions 
$f : (0, \infty) \to \mathbb{C}$ such that 
\begin{equation*}
f(x) = x^{p} f_{1}(x),
\qquad p > \mu, \quad f_{1} \in C[0, \infty).    
\end{equation*}
In particular, we are mainly interested in the case $\mu = -1$, since the space $C_{-1}$ is contained in $L^{1}(0, \infty)$ and includes $C[0, \infty)$.
\end{defn}
\begin{defn}\textup{\cite{hadid1996operational,hilfer2009operational,luchko2021general}}
Let $\mu \ge -1$. Within the vector space $C_{\mu}$, we define the following subspaces, corresponding to various differentiability conditions required for defining different
fractional derivative operators:
\begin{itemize}
    \item[\textup{(i)}] The space ${}^{\mathrm{R}}\Omega_{\mu}^{\upalpha}$, for a given $\upalpha > 0$, consists of all 
    functions $f \in C_{\mu}$ such that the left sided Riemann Liouville derivatives 
    ${}_{\:a}^{\mathrm{R}}\mathbb{D}_{x}^{\beta} f$ exist as functions in $C_{\mu}$ for all 
    $\beta$ satisfying $0 \le \beta \le \upalpha$.

    \item[\textup{(ii)}] The space ${}^{\mathrm{H}}\Omega_{\mu}^{\upalpha}$, for a given $\upalpha > 0$, consists of all 
    functions $f \in C_{\mu}$ such that the left sided Hilfer derivatives 
    ${}_{\:a}^{\mathrm{H}}\mathbb{D}_{x}^{\upalpha, v} f$ exist as functions in $C_{\mu}$ for all 
    $\beta$ satisfying $0 \le \beta \le \upalpha$ and all $v \in [0,1]$.

    \item[\textup{(iii)}] The space $C_{\mu}^{n}$, for a given $n \in \mathbb{N}$, consists of all functions 
    $f \in C_{\mu}$ such that the $n$th-order derivative $f^{(n)}$ exists as a function in $C_{\mu}$.
\end{itemize}
All of these spaces are vector subspaces of $C_{\mu}$. Particular attention is given to the case $\mu = -1$.
\end{defn}
Hence, we retain the classical definitions of ${}^{\mathrm{R}}\Omega_{\mu}^{\upalpha}$ and ${}^{\mathrm{H}}\Omega_{\mu}^{\upalpha}$, with both $\mu$ and $\upalpha$ considered as real parameters.
\begin{prop}\textup{\cite{dimovski1966operational,hadid1996operational,rani2022mikusinski}}
For any $\mu \ge -1, a \geq 0$, the function space $C_\mu$ forms a commutative rng \textup{(}i.e., a ring without multiplicative identity\textup{)} under pointwise addition and Laplace convolution. Moreover, the left Riemann-Liouville fractional integral and the left Prabhakar fractional integral act on this space as
\begin{equation*}
{}_{\:a}^{\textup{R}}\mathbb{I}_{x}^\upalpha : C_\mu \longrightarrow C_{\mu + \mathrm{Re}\,(\upalpha)} \subset C_\mu, 
\qquad
{}_a\mathbb{E}_x^{\upalpha, \beta, \gamma, \delta} : C_\mu \longrightarrow C_{\mu + \mathrm{Re}\,(\beta)} \subset C_\mu,
\end{equation*}
for any $\upalpha, \beta, \gamma, \delta \in \mathbb{C}$ with $\mathrm{Re}\,(\upalpha) > 0$ and $\mathrm{Re}\,(\beta) > 0$.
\end{prop}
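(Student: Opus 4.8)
The proposition splits into two independent assertions, and I would establish them in turn: first that $(C_\mu,+,*)$ with the Laplace convolution $(f*g)(x)=\int_0^x f(x-t)g(t)\,dt$ is a commutative rng, and then the two mapping properties. For the algebraic part, $C_\mu$ is visibly a $\C$-vector space under pointwise operations, directly from the representation $f(x)=x^p f_1(x)$ with $p>\mu$ and $f_1\in C[0,\infty)$. Commutativity of $*$ follows from the substitution $u=x-t$; associativity follows from Fubini's theorem applied on the triangle $\{(\sigma,\tau):0\le\sigma\le x-\tau,\ 0\le\tau\le x\}$, which is legitimate since the integrand is continuous there; and bilinearity and distributivity are immediate from linearity of the integral. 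The only substantive algebraic point is closure, which I would prove together with the fact that convolution raises the index. The structure is genuinely only a rng because the convolution has no identity in $C_\mu$: the neutral element would be the Dirac delta, which is not a function, and $C_\mu$ has no zero divisors by the classical Titchmarsh convolution theorem, so no unit can exist.

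For closure, take $f(x)=x^p f_1(x)$ and $g(x)=x^q g_1(x)$ with $p,q>\mu\ge-1$ and substitute $t=xs$ in the convolution to obtain
\[
(f*g)(x)=x^{p+q+1}\int_0^1 (1-s)^p\,s^q\,f_1\bigl(x(1-s)\bigr)\,g_1(xs)\,ds .
\]
The integral is of Beta type and converges because $\operatorname{Re}(p),\operatorname{Re}(q)>-1$; continuity under the integral sign (dominated convergence with dominating function $(1-s)^{\operatorname{Re}p}s^{\operatorname{Re}q}\sup|f_1|\sup|g_1|$) shows it defines a function $h\in C[0,\infty)$. Since $q+1>0$ gives $p+q+1>p>\mu$, the exponent exceeds $\mu$, so $f*g\in C_\mu$, proving closure.

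For the mapping properties I would exploit that both operators are convolutions. Writing ${}_{\:0}^{\textup{R}}\mathbb{I}_x^\upalpha f(x)=\int_0^x(x-t)^{\upalpha-1}t^p f_1(t)\,dt$ and substituting $t=xs$ yields
\[
{}_{\:0}^{\textup{R}}\mathbb{I}_x^\upalpha f(x)=x^{\upalpha+p}\int_0^1 (1-s)^{\upalpha-1}s^p f_1(xs)\,ds ,
\]
where the integral converges since $\operatorname{Re}(\upalpha)>0$ and $p>\mu\ge-1$ and defines a continuous function on $[0,\infty)$; here I take the base point at the origin, which is the case relevant to $C_\mu$. For the Prabhakar operator I would either repeat this rescaling with the kernel $x^{\beta-1}E^{\gamma}_{\upalpha,\beta}(\delta x^{\upalpha})$—using that $E^{\gamma}_{\upalpha,\beta}$ is entire and hence bounded on compacta, so the resulting Beta-type integral is again continuous on $[0,\infty)$—or, more transparently, invoke the locally uniformly convergent series of Lemma~\ref{Seriesformulae}: each term $\tfrac{(\gamma)_k\,\delta^k}{k!}\,{}_{\:0}^{\textup{R}}\mathbb{I}_x^{\upalpha k+\beta}f$ lies in $C_{\mu+k\operatorname{Re}\upalpha+\operatorname{Re}\beta}\subset C_{\mu+\operatorname{Re}\beta}$ by the Riemann--Liouville result, and local uniform convergence lets me pass to the limit while preserving membership in $C_{\mu+\operatorname{Re}\beta}$.

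The main obstacle is the complex order. The prefactor splits as $x^{\upalpha+p}=x^{\operatorname{Re}\upalpha+p}\,x^{i\operatorname{Im}\upalpha}$, and the oscillatory factor $x^{i\operatorname{Im}\upalpha}$, though of modulus one, is \emph{not} continuous at the origin, so the output is not literally of the form $x^{p'}(\text{continuous})$ demanded by the definition of $C_\mu$. I would resolve this by choosing any real $p'$ with $\mu+\operatorname{Re}\upalpha<p'<p+\operatorname{Re}\upalpha$ (possible since $p>\mu$) and writing the result as $x^{p'}\bigl[x^{\,p+\operatorname{Re}\upalpha-p'}\,x^{i\operatorname{Im}\upalpha}\,g_1(x)\bigr]$. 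The bracket carries a strictly positive power of $x$ times a bounded factor and a continuous $g_1$, so it extends continuously to $x=0$ with value $0$; hence the result lies in $C_{p'}\subset C_{\mu+\operatorname{Re}\upalpha}$. The identical device disposes of the complex-order subtlety in the Prabhakar case, giving ${}_0\mathbb{E}_x^{\upalpha,\beta,\gamma,\delta}:C_\mu\to C_{\mu+\operatorname{Re}\beta}$, and the inclusions $C_{\mu+\operatorname{Re}\upalpha}\subset C_\mu$ and $C_{\mu+\operatorname{Re}\beta}\subset C_\mu$ follow from $\operatorname{Re}\upalpha,\operatorname{Re}\beta>0$ since a larger index yields a smaller space.
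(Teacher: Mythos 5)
The paper never proves this proposition: it is stated as a known preliminary, with the proof delegated to the cited sources \cite{dimovski1966operational,hadid1996operational,rani2022mikusinski}. Your argument therefore supplies what the paper omits, and in its essentials it is correct and follows the standard route of that literature: the substitution $t=xs$ turning convolution and the Riemann--Liouville integral into Beta-type integrals, the resulting index shift $p\mapsto p+q+1$ (where $\mu\ge -1$ enters exactly through $q+1>0$), and the reduction of the Prabhakar case to the Riemann--Liouville one via the kernel $x^{\beta-1}E^{\gamma}_{\upalpha,\beta}(\delta x^{\upalpha})$ or the series of Lemma~\ref{Seriesformulae}. Your treatment of complex orders --- factoring out a real power $p'$ with $\mu+\operatorname{Re}(\upalpha)<p'<p+\operatorname{Re}(\upalpha)$ so that the oscillatory factor $x^{i\operatorname{Im}\upalpha}$ is absorbed into a bracket vanishing continuously at the origin --- addresses a genuine subtlety that is usually glossed over, and is exactly right. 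Three soft spots are worth tightening. First, your "no identity" remark is heuristic: absence of zero divisors (Titchmarsh) does not by itself preclude a unit; the clean argument is that any $e\in C_\mu$ forces $e*f=x^{p_e+1}\,(\text{continuous})\cdot x^{p_f}$, so $e*f=f$ fails for any $f=x^{p_f}f_1$ with $f_1(0)\neq 0$, since $p_e+1>0$. (Strictly, a rng is not required to lack an identity, so this point is cosmetic anyway.) Second, in the series route for the Prabhakar operator, "local uniform convergence preserves membership in $C_{\mu+\operatorname{Re}\beta}$" is not a theorem as stated --- a locally uniform limit of elements of $C_\nu$ need not lie in $C_\nu$; one must first factor $x^{\beta+p}$ out of the whole series and then verify that the remaining series of continuous brackets converges locally uniformly on $[0,\infty)$. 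Your alternative direct-kernel computation avoids this and should be taken as the primary argument. Third, you prove only the base point $a=0$; the proposition as stated allows $a\ge 0$, though this is the case actually used in the paper's operational calculus and the $a>0$ case is degenerate for spaces defined by behaviour at the origin.
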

\begin{prop}\textup{\cite{hadid1996operational,rani2022mikusinski}}
Every left sided Riemann Liouville fractional derivative and every left sided Prabhakar fractional derivative of Riemann Liouville type acts on the corresponding subspace of $C_{-1}$ as follows:
\begin{equation*}
{}_{\:a}^{\textup{R}} \mathbb{D}_x^\upalpha : {}^{\textup{R}} \Omega_{-1}^{\mathrm{Re}\,(\upalpha)} \longrightarrow C_{-1}, 
\qquad
{}_{\:\:\:a}^{\textup{PR}} \mathbb{D}_x^{\upalpha, \beta, \gamma, \delta} : {}^{\textup{R}} \Omega_{-1}^{\mathrm{Re}\,(\beta)} \longrightarrow C_{-1},
\end{equation*}
for any $\upalpha, \beta, \gamma, \delta \in \mathbb{C}$ with $\mathrm{Re}\,(\upalpha) \ge 0$ and $\mathrm{Re}\,(\beta) \ge 0$.
\end{prop}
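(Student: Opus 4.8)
The plan is to establish the two mapping statements separately, reducing each to the definition of the spaces ${}^{\textup{R}}\Omega_{-1}^{\mathrm{Re}(\upalpha)}$ and ${}^{\textup{R}}\Omega_{-1}^{\mathrm{Re}(\beta)}$, to the mapping property ${}_{\:a}^{\textup{R}}\mathbb{I}_x^\upalpha : C_\mu \to C_{\mu+\mathrm{Re}(\upalpha)}\subset C_\mu$ recorded in the preceding proposition, and to the series representation of the Prabhakar derivative in Lemma~\ref{Seriesformulae}.

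For the Riemann--Liouville derivative, recall that $f\in{}^{\textup{R}}\Omega_{-1}^{\mathrm{Re}(\upalpha)}$ means, by definition, that ${}_{\:a}^{\textup{R}}\mathbb{D}_x^\beta f\in C_{-1}$ for every real $\beta\in[0,\mathrm{Re}(\upalpha)]$. When $\upalpha$ is real the conclusion is immediate on taking $\beta=\upalpha$. For general $\upalpha$ with $\mathrm{Re}(\upalpha)\ge0$, I would use the composition rule \eqref{rlsemigroup} together with the convention \eqref{dertoint} to factor ${}_{\:a}^{\textup{R}}\mathbb{D}_x^\upalpha = {}_{\:a}^{\textup{R}}\mathbb{I}_x^{\mathrm{Re}(\upalpha)-\upalpha}\circ{}_{\:a}^{\textup{R}}\mathbb{D}_x^{\mathrm{Re}(\upalpha)}$; the inner operator sends $f$ into $C_{-1}$ by the definition of the space, and the residual factor of order $\mathrm{Re}(\upalpha)-\upalpha$ is absorbed by the mapping property of the Riemann--Liouville operator, so that ${}_{\:a}^{\textup{R}}\mathbb{D}_x^\upalpha f\in C_{-1}$.

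For the Prabhakar derivative of Riemann--Liouville type, I would start from the locally uniformly convergent series of Lemma~\ref{Seriesformulae}, namely ${}_{\:\:\:a}^{\textup{PR}}\mathbb{D}_x^{\upalpha,\beta,\gamma,\delta}f=\sum_{k=0}^{\infty}\frac{(-\gamma)_k\delta^k}{k!}\,{}_{\:a}^{\textup{R}}\mathbb{I}_x^{\upalpha k-\beta}f$, and split it at the index $k_0$ chosen minimal with $k_0\,\mathrm{Re}(\upalpha)>\mathrm{Re}(\beta)$. For $k<k_0$ the exponent $\upalpha k-\beta$ has nonpositive real part, so each such term equals a Riemann--Liouville derivative ${}_{\:a}^{\textup{R}}\mathbb{D}_x^{\beta-\upalpha k}f$ whose order has real part in $[0,\mathrm{Re}(\beta)]$; this lies in $C_{-1}$ because $f\in{}^{\textup{R}}\Omega_{-1}^{\mathrm{Re}(\beta)}$, and since there are finitely many such $k$ their sum is in $C_{-1}$. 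For $k\ge k_0$ the exponent has positive real part, so ${}_{\:a}^{\textup{R}}\mathbb{I}_x^{\upalpha k-\beta}$ is a genuine fractional integral carrying $C_{-1}$ into $C_{-1}$ by the preceding proposition.

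The main obstacle I anticipate is the final step: upgrading the locally uniform convergence of the tail $\sum_{k\ge k_0}$ to an actual membership in $C_{-1}$, which requires a single power $x^{p}$ with $p>-1$ common to the whole tail. I would resolve this by factoring ${}_{\:a}^{\textup{R}}\mathbb{I}_x^{\upalpha k-\beta}={}_{\:a}^{\textup{R}}\mathbb{I}_x^{\upalpha(k-k_0)}\circ{}_{\:a}^{\textup{R}}\mathbb{I}_x^{\upalpha k_0-\beta}$ through the semigroup property \eqref{rlsemigroup}, pulling out the fixed element $g:={}_{\:a}^{\textup{R}}\mathbb{I}_x^{\upalpha k_0-\beta}f\in C_{-1+\mathrm{Re}(\upalpha k_0-\beta)}$, and rewriting the tail as the entire power series $\sum_{j\ge0}\frac{(-\gamma)_{j+k_0}\delta^{j+k_0}}{(j+k_0)!}\,{}_{\:a}^{\textup{R}}\mathbb{I}_x^{\upalpha j}g$ in the operator ${}_{\:a}^{\textup{R}}\mathbb{I}_x^{\upalpha}$. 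Since this operator maps $C_{-1}$ into itself and the coefficients decay like those defining the Prabhakar function, each term stays in the fixed space $C_{-1+\mathrm{Re}(\upalpha k_0-\beta)}\subset C_{-1}$ and the series converges there, exactly as in the proof of the series formula of Lemma~\ref{Seriesformulae}; combining this limit with the finite low-order part yields ${}_{\:\:\:a}^{\textup{PR}}\mathbb{D}_x^{\upalpha,\beta,\gamma,\delta}f\in C_{-1}$.
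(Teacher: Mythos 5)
There is nothing in the paper to compare your argument against: this proposition is imported verbatim from the literature \cite{hadid1996operational,rani2022mikusinski} and the paper gives no proof of it. Judged on its own merits, your treatment of the Prabhakar operator is the standard route used in those references: split the series of Lemma~\ref{Seriesformulae} at the index where the orders $\upalpha k-\beta$ change sign, dispose of the finitely many derivative-type terms via the definition of ${}^{\textup{R}}\Omega_{-1}^{\mathrm{Re}(\beta)}$, and control the integral-type tail by factoring out $g={}_{\:a}^{\textup{R}}\mathbb{I}_x^{\upalpha k_0-\beta}f$ with the semigroup property and re-running the convergence argument behind the series formula. That part is sound, at least for real parameters.

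The genuine gap is your handling of complex orders, which is precisely what the ``$\mathrm{Re}$'' formulation of the statement demands. In the first part you factor ${}_{\:a}^{\textup{R}}\mathbb{D}_x^{\upalpha}={}_{\:a}^{\textup{R}}\mathbb{I}_x^{\mathrm{Re}(\upalpha)-\upalpha}\circ{}_{\:a}^{\textup{R}}\mathbb{D}_x^{\mathrm{Re}(\upalpha)}$ and assert that the outer factor is ``absorbed by the mapping property.'' But $\mathrm{Re}(\upalpha)-\upalpha=-i\,\mathrm{Im}(\upalpha)$ is purely imaginary: the mapping property ${}_{\:a}^{\textup{R}}\mathbb{I}_x^{\cdot}:C_\mu\to C_{\mu+\mathrm{Re}(\cdot)}\subset C_\mu$ recorded in the preceding proposition is stated only for orders of strictly positive real part, so it says nothing about this factor, and operators of purely imaginary order are not shown (and cannot be expected) to map $C_{-1}$ into itself in this framework. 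Moreover, the factorization identity itself is not licensed by \eqref{rlsemigroup}--\eqref{inttermprop}: composing an outer integral with an inner derivative generally produces initial-value correction terms, as in \eqref{inttermprop}, so even the algebraic identity needs justification. The same defect recurs, less visibly, in your second part: for $k<k_0$ you claim ${}_{\:a}^{\textup{R}}\mathbb{D}_x^{\beta-\upalpha k}f\in C_{-1}$ ``because $f\in{}^{\textup{R}}\Omega_{-1}^{\mathrm{Re}(\beta)}$,'' but that space, as defined in the paper (with $\upalpha$, $\mu$ real), only controls derivatives of \emph{real} order in $[0,\mathrm{Re}(\beta)]$, whereas $\beta-\upalpha k$ is genuinely complex. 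Your argument is therefore complete only for real $\upalpha,\beta$; for complex orders one needs either an extended definition of the $\Omega$-spaces or a separate lemma placing complex-order derivatives with real part in range inside $C_{-1}$, and the factorization through an imaginary-order operator supplies neither.
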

In the next section we present the fractional derivative called the $n$th-level Prabhakar fractional derivative.
section{The \texorpdfstring{$n$}{nth}th-level Prabhakar fractional derivative}\label{S3}
The construction of the $n$th-level Prabhakar fractional derivative involves $n$ successive compositions of first order derivatives and Prabhakar fractional integrals. Consequently, this process yields $n$ distinct families of parameter fractional derivatives, referred to as the $n$th-level Prabhakar fractional derivatives.

We suppose that the parameters $\beta_1, \beta_2,\ldots,\beta_n \in \mathbb{R}$, and $\theta_1, \theta_2,\ldots,\theta_n \in \mathbb{R}$
satisfy the following conditions:
\begin{equation}
0 \leq \beta + s_n \leq 1, \quad \text{and}\quad  0 \leq\theta_{n} \leq 1  \quad n = 1, 2, \ldots, n,
\end{equation}
where, for convenience, we used the notation
\begin{align*}
s_n = \sum_{i=1}^{n} \beta_i, \quad i = 1, 2, \ldots, n, \qquad
\theta_{n} = \sum_{i=1}^{n} \theta_i, \quad i = 1, 2, \ldots, n. 
\end{align*}
Let $f \in L^1[a, b], a < b$, then, the $n$th-level left right Prabhakar fractional derivatives are defined as follows:
\begin{align}\label{basicdefn}
&{}_{a}\mathbb{D}^{\upalpha,\beta, \gamma, \delta; \theta_{n}}_{nL, x} f(x) = {}_{a}\mathbb{E}_{x}^{\upalpha, s_{n}, -\gamma \theta_{n}, \delta}\left(\frac{\mathrm{~d}}{\mathrm{~d} x}\right)^n  {}_{a}\mathbb{E}_{x}^{\upalpha,n - \beta - s_{n}, -\gamma(n- \theta_{n}), \delta}  f(x),\\
&{}_{b}\mathbb{D}^{\upalpha,\beta, \gamma, \delta; \theta_{n}}_{nL, x} f(x) = {}_{b}\mathbb{E}_{x}^{\upalpha, s_{n}, -\gamma \theta_{n}, \delta}\left(-\frac{\mathrm{~d}}{\mathrm{~d} x}\right)^n  {}_{b}\mathbb{E}_{x}^{\upalpha,n - \beta - s_{n}, -\gamma(n- \theta_{n}), \delta}  f(x).
\end{align}
We can recover several well-known fractional derivatives as special cases of the general operator defined in~\eqref{basicdefn}, 
which we refer to as the left and right sided $n$th-level Prabhakar fractional derivative.
\begin{itemize}
\item[(i)] In~\eqref{basicdefn}, by taking $n = 1$ and selecting $s_{1} = \theta_{1}(1 - \beta),$ the $n$th-level Prabhakar fractional derivative reduces to the Hilfer Prabhakar fractional derivative~\cite{hilfer2000applications}.
\item[(ii)] By taking $n = 1$, $\theta_{1} = 1$, and choosing $s_{1} = \theta_{1}(1 - \beta),$ the $n$th level Prabhakar fractional derivative reduces to the Prabhakar Caputo fractional derivative~\cite{hilfer2000applications}.

\item[(iii)] By taking $n = 1$, $\theta_{1} = 0$, and choosing $s_{1} = \theta_{1}(1 - \beta),$ the $n$th-level Prabhakar fractional derivative, it reduces to the Prabhakar Riemann Liouville fractional derivative~\cite{hilfer2000applications}.

\item[(iv)] By setting $\delta = 0$ or $\gamma = 0$ in~\eqref{basicdefn}, the kernel of the $ n$th-level Prabhakar derivative reduces to a power function form, and the operator collapses to the $n$th-level fractional derivative introduced in~\cite{al2025fractional}.

\item[(v)] The $n$th-level Prabhakar fractional derivative reduces to the classical Hilfer fractional derivative when $n=1$, $\gamma = 0$, and $s_{1} = \theta_{1}(1 - \beta);$ see~\cite{hilfer2000applications}.

\item[(vi)] The $n$th-level Prabhakar fractional derivative reduces to the classical Riemann Liouville fractional derivative when $\delta = 0$ or $\gamma = 0$; see~\cite{diethelm2010analysis}.

\item[(vii)] Likewise, the $n$th-level Prabhakar fractional derivative reduces to the classical Caputo fractional derivative when $\delta = 0$ or $\gamma = 0$; see~\cite{caputo1967linear,dzherbashian2020fractional}. 
\end{itemize}

If we take $x=\psi(x)$ and define the parameters as in (i)--(vii), then the left and right sided versions of these operators are obtained; see~\cite{anastassiou2021foundations}.
These reductions demonstrate that the proposed $n$th-level Prabhakar fractional derivative 
serves as a unified framework that includess some classical fractional operators as particular cases.

Now, we consider a power function and determine its $n$th-level Prabhakar fractional derivative.
\begin{ex}
Let $f(x) = x^r$, $r > -1$, and consider the $n=1$ in $n$th level Prabhakar fractional derivative defined by
\begin{equation*}
({}_{0}\mathbb{D}^{\upalpha,\beta,\gamma,\delta; \theta_{1}}_{1L, x} )f(x) 
=\left({}_{0}\mathbb{E}^{\upalpha,s_1,-\gamma \theta_{1},\delta}_{x} \frac{d}{dx}\right) 
\left({}_{0}\mathbb{E}^{\upalpha,1-\beta-s_1, -\gamma(1- \theta_{1})}_{x} f \right)(x).
\end{equation*}
Using the definition of the Prabhakar integral operator,
\begin{equation*}
{}_{0}\mathbb{E}^{\upalpha,\beta,\gamma,\delta}_x f(x) 
= \int_0^x (x-t)^{\beta-1} E_{\upalpha,\beta}^{\gamma}(\delta (x-t)^\upalpha) f(t)\, dt.
\end{equation*}
Applying the inner operator to $f(x) = x^r$ and using the result of \textup{\cite{prabhakar1971singular}},
\begin{align*}
{}_{0}\mathbb{E}^{\upalpha,1-\beta-s_1, -\gamma(1- \theta_{1}),\delta}_{x} ~x^r 
&= \int_0^x t^{r} (x-t)^{1-\beta-s_1-1} E_{\upalpha,1-\beta-s_1}^{-\gamma(1- \theta_{1})}(\delta (x-t)^\upalpha) \, dt \\
&= \Gamma(r+1)\,x^{r+1-\beta-s_1} E_{\upalpha,r+2-\beta-s_1}^{-\gamma(1- \theta_{1})}(\delta x^\upalpha).
\end{align*}
\begin{equation*}
\frac{d}{dx}~ {}_{0}\mathbb{E}^{\upalpha,1-\beta-s_1, -\gamma(1- \theta_{1}),\delta}_x (x^r)
= \Gamma(r+1)\,x^{r-\beta-s_1} E_{\upalpha,r+1-\beta-s_1}^{-\gamma(1- \theta_{1})}(\delta x^\upalpha).
\end{equation*}
Now applying ${}_{0}\mathbb{E}^{\upalpha,s_1, -\gamma(1- \theta_{1}) \theta_{1},\delta}$ yields
\begin{align}
{}_{0}\mathbb{E}^{\upalpha,s_1, -\gamma \theta_{1},\delta}_x &\left(\Gamma(r+1)~x^{r-\beta-s_1} E_{\upalpha,r+1-\beta-s_1}^{-\gamma(1- \theta_{1})}(\delta x^\upalpha)\right)\nonumber\\ 
=&\Gamma(r+1)\sum_{k=0}^{\infty}\frac{(-\gamma \theta_{1})_k\,\delta^k}{k!\,\Gamma(\upalpha k+s_1)} \times\nonumber\\
&\int_0^x (x-t)^{s_1-1+\upalpha k}t^{r-\beta-s_1}
E_{\upalpha,r+1-\beta-s_1}^{-\gamma(1- \theta_{1})}(\delta t^\upalpha)\,dt. \label{inteq}
\end{align}
Using the result of \textup{\cite{prabhakar1971singular}}, we have
\begin{align}\label{integralsol}
\int_0^x (x-t)^{s_1-1+\upalpha k}&t^{r-\beta-s_1}
E_{\upalpha,r+1-\beta-s_1}^{-\gamma(1- \theta_{1})}(\delta t^\upalpha)\,dt = \nonumber\\
& \Gamma(r+1-\beta-s_1)\,x^{r-\beta+\upalpha k}
E_{\upalpha,r+1-\beta+\upalpha k}^{-\gamma(1- \theta_{1})}(\delta x^\upalpha).
\end{align}
Substituting \eqref{integralsol} into \eqref{inteq}, we obtain
\begin{align*}
&= \Gamma(r+1)\Gamma(r+1-\beta-s_1)
\sum_{k=0}^{\infty}\frac{(- \gamma \theta_{1})_k\,\delta^k}{k!\,\Gamma(\upalpha k+s_1)}
x^{r-\beta+\upalpha k}
E_{\upalpha,r+1-\beta+\upalpha k}^{-\gamma(1- \theta_{1})}(\delta x^\upalpha).
\end{align*}
Here, In the above expression, $(- \gamma \theta_{1})_k$ represents the Pochhammer symbol. The first level Prabhakar fractional derivative of $x^r$ is
\begin{align*}
{}_{0}\mathbb{D}^{\upalpha,\beta,\gamma,\delta; \theta_{1}}_{1L, x}~x^r
=& \Gamma(r+1)\,\Gamma(r+1-\beta-s_1)
\sum_{k=0}^{\infty}\frac{(- \gamma \theta_{1})_k\,\delta^k}{k!\,\Gamma(\upalpha k+s_1)} \times\\
&\quad x^{r-\beta+\upalpha k}
E_{\upalpha,r+1-\beta+\upalpha k}^{-\gamma(1- \theta_{1})}(\delta x^\upalpha).
\end{align*}
\end{ex}

Next, we present an interpretation of the $n$th-level Prabhakar fractional derivative in the function space ${}^{\textup{H}}\Omega_{-1}^\upalpha$, and then establish the relationship between the Prabhakar integral and the $n$th-level Prabhakar fractional derivative.

\begin{thm}\label{thm1111}
Let $\upalpha>0, \beta \geq 0, a < b$ with $n \in \mathbb{N}$, and $\gamma, \delta \in \mathbb{C}$. The left $n$th level Prabhakar fractional derivative maps the function space ${}^{\textup{H}} \Omega_{-1}^\upalpha$ into $C_{-1}$:
\begin{equation}
{}_{a}\mathbb{D}_{nL, x}^{\upalpha, \beta, \gamma, \delta;\theta_{n}}: {}^{\textup{H}}\Omega_{-1}^\upalpha \longrightarrow C_{-1}.    
\end{equation}
Moreover, it can be related to the derivative of the left Prabhakar Riemann Liouville type by the following relation:
\begin{align}
{}_{a}\mathbb{D}^{\upalpha,\beta, \gamma, \delta; \theta_{n}}_{nL, x} f(x) =&{}_{\:\;\;a}^{\textup{PR}}\mathbb{D}^{\beta}_{x}~ f(x) -\sum_{j=0}^{N} \sum_{k=0}^{M_j - 1} \frac{(-\gamma(n - \theta_{n}))_j ~ \delta^{j}}{j!} \times   \nonumber\\
&\left({ }{}_{\:a}^{\textup{R}} \mathbb{D}_x^{-\upalpha j+\beta + s_{n}-k-1} f\right)(0)~ x^{s_{n}-k-1} E_{\upalpha, s_{n}-k}^{-\gamma \theta_{n}} (\delta x^\upalpha).
\end{align}
for any function $f \in {}^{\textup{H}} \Omega_{-1}^\upalpha$, where $N=\left\lfloor\frac{\beta+ s_{n}}{\upalpha}\right\rfloor$ and $M_j=\lfloor-\upalpha j+\beta + s_{n}\rfloor+1 \in \mathbb{Z}^{+}$for each $j=0,1, \ldots, N$.
\end{thm}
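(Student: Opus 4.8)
The plan is to collapse the two-sided composition in the definition~\eqref{basicdefn} onto a single Prabhakar Riemann–Liouville derivative plus an explicit, finite packet of initial-value corrections, the two main tools being the series representations of Lemma~\ref{Seriesformulae} and the modified composition rule~\eqref{inttermprop}. First I would dispose of the inner block $\mathbb{D}^n\,{}_{a}\mathbb{E}_{x}^{\upalpha,n-\beta-s_n,-\gamma(n-\theta_n),\delta}$. Expanding the inner Prabhakar integral by Lemma~\ref{Seriesformulae} and pushing the integer derivative $\mathbb{D}^n$ through each Riemann–Liouville integral by the clean \emph{outer}-derivative law $\mathbb{D}^n\circ{}_{\:a}^{\textup{R}}\mathbb{I}_x^{\mu}={}_{\:a}^{\textup{R}}\mathbb{I}_x^{\mu-n}$ (valid with no boundary terms because the derivative sits on the outside, cf.~\eqref{rlsemigroup}), the series reduces term by term to
\begin{equation*}
\mathbb{D}^n\,{}_{a}\mathbb{E}_{x}^{\upalpha,n-\beta-s_n,-\gamma(n-\theta_n),\delta}f(x)=\sum_{l=0}^{\infty}\frac{(-\gamma(n-\theta_n))_l\,\delta^l}{l!}\,{}_{\:a}^{\textup{R}}\mathbb{I}_x^{\upalpha l-\beta-s_n}f(x)={}_{\:\:\:a}^{\textup{PR}}\mathbb{D}_x^{\upalpha,\beta+s_n,\gamma(n-\theta_n),\delta}f(x),
\end{equation*}
again by Lemma~\ref{Seriesformulae}. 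Thus the $n$th-level derivative collapses to the outer Prabhakar integral acting on a single Prabhakar Riemann–Liouville derivative,
\begin{equation*}
{}_{a}\mathbb{D}^{\upalpha,\beta,\gamma,\delta;\theta_{n}}_{nL,x}f={}_{a}\mathbb{E}_{x}^{\upalpha,s_n,-\gamma\theta_n,\delta}\,{}_{\:\:\:a}^{\textup{PR}}\mathbb{D}_x^{\upalpha,\beta+s_n,\gamma(n-\theta_n),\delta}f.
\end{equation*}

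Next I would expand both remaining operators by their series (Lemma~\ref{Seriesformulae}), producing a double sum over an index $j$ (from the outer integral, coefficient $(-\gamma\theta_n)_j\delta^j/j!$ and operator ${}_{\:a}^{\textup{R}}\mathbb{I}_x^{\upalpha j+s_n}$) and an index $l$ (from the derivative, coefficient $(-\gamma(n-\theta_n))_l\delta^l/l!$ and operator ${}_{\:a}^{\textup{R}}\mathbb{I}_x^{\upalpha l-\beta-s_n}$). Each inner composition ${}_{\:a}^{\textup{R}}\mathbb{I}_x^{\upalpha j+s_n}\circ{}_{\:a}^{\textup{R}}\mathbb{I}_x^{\upalpha l-\beta-s_n}$ is rewritten through the modified composition property~\eqref{inttermprop} (with outer role $-(\upalpha j+s_n)$ and inner role $\beta+s_n-\upalpha l$), splitting each term into a principal part ${}_{\:a}^{\textup{R}}\mathbb{I}_x^{\upalpha(j+l)-\beta}={}_{\:a}^{\textup{R}}\mathbb{D}_x^{\beta-\upalpha(j+l)}$ and a finite packet of $m_l=\lfloor\beta+s_n-\upalpha l\rfloor+1$ initial-value terms carrying the factor $x^{\upalpha j+s_n-k-1}/\Gamma(\upalpha j+s_n-k)$ and the constant $({}_{\:a}^{\textup{R}}\mathbb{D}_x^{\beta+s_n-\upalpha l-k-1}f)(0)$.

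For the principal part I would set $m=j+l$ and invoke the Vandermonde convolution for Pochhammer symbols, $\sum_{j=0}^{m}\frac{(a)_j(b)_{m-j}}{j!\,(m-j)!}=\frac{(a+b)_m}{m!}$ with $a=-\gamma\theta_n$, $b=-\gamma(n-\theta_n)$ and $a+b=-\gamma n$; the double sum collapses to $\sum_m\frac{(-\gamma n)_m\delta^m}{m!}\,{}_{\:a}^{\textup{R}}\mathbb{D}_x^{\beta-\upalpha m}f$, which is exactly the series of the Prabhakar Riemann–Liouville derivative ${}_{\:\:\:a}^{\textup{PR}}\mathbb{D}_x^{\beta}f$ of order $\beta$ on the right-hand side, the Vandermonde sum fixing its third parameter as $\gamma n$ (which is $\gamma$ when $n=1$). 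For the boundary part, for each fixed inner index $l$ and each $k$ the sum over the outer index $j$ is precisely $\sum_{j}\frac{(-\gamma\theta_n)_j\delta^j}{j!}\frac{x^{\upalpha j+s_n-k-1}}{\Gamma(\upalpha j+s_n-k)}=x^{s_n-k-1}E_{\upalpha,s_n-k}^{-\gamma\theta_n}(\delta x^\upalpha)$, which resums the outer index into the Prabhakar function and reproduces the stated correction after relabelling $l\mapsto j$. The summation limits follow by inspection: the $k$-packet is non-empty exactly when $m_l\ge1$, i.e. $\beta+s_n-\upalpha l\ge0$, giving $0\le l\le N=\lfloor(\beta+s_n)/\upalpha\rfloor$, with $M_j=\lfloor-\upalpha j+\beta+s_n\rfloor+1$. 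The mapping ${}^{\textup{H}}\Omega_{-1}^\upalpha\to C_{-1}$ is then immediate from the identity itself: its right-hand side is the sum of a Prabhakar Riemann–Liouville derivative, which lies in $C_{-1}$ by the preceding Proposition, and finitely many explicit functions $x^{s_n-k-1}E_{\upalpha,s_n-k}^{-\gamma\theta_n}(\delta x^\upalpha)\in C_{-1}$.

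The main obstacle I expect is the analytic bookkeeping rather than any single identity: justifying the termwise application of~\eqref{inttermprop} inside the infinite series and the interchange of the $j$- and $l$-summations requires the locally uniform convergence supplied by Lemma~\ref{Seriesformulae}, and one must verify that the initial-value constants $({}_{\:a}^{\textup{R}}\mathbb{D}_x^{\beta+s_n-\upalpha l-k-1}f)(0)$ are well defined for $f\in{}^{\textup{H}}\Omega_{-1}^\upalpha$ and that the resulting boundary functions genuinely belong to $C_{-1}$. The delicate cases are those $(l,k)$ for which the leading exponent $s_n-k-1$ approaches $-1$; there one uses that the vanishing of the lowest Prabhakar coefficient $1/\Gamma(s_n-k)$, or equivalently the vanishing of the corresponding fractional integral of $f$ at the origin, keeps the term admissible.
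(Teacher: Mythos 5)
Your proposal is correct and follows essentially the same route as the paper's own proof: expand the Prabhakar factors into Riemann--Liouville series via Lemma~\ref{Seriesformulae}, push the integer derivative through without boundary terms, apply the modified composition property~\eqref{inttermprop} to split each term into a principal part and a finite initial-value packet, collapse the principal double sum by the Chu--Vandermonde identity into the Prabhakar Riemann--Liouville derivative, and resum the outer index of the boundary terms into $x^{s_{n}-k-1}E_{\upalpha,s_{n}-k}^{-\gamma\theta_{n}}(\delta x^{\upalpha})$. The only difference is organizational --- you first collapse the inner block $\mathbb{D}^{n}\,{}_{a}\mathbb{E}_{x}^{\upalpha,n-\beta-s_{n},-\gamma(n-\theta_{n}),\delta}$ into a single Prabhakar Riemann--Liouville derivative before expanding, while the paper expands both factors simultaneously --- and your closing caveat about the delicate $C_{-1}$ membership of the correction terms (vanishing of $1/\Gamma(s_{n}-k)$ or of the initial values) matches the paper's own concluding discussion.
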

\begin{proof}
Let $f \in {}^{\textup{H}} \Omega_{-1}^\upalpha \subset {}^{\textup{R}} \Omega_{-1}^\upalpha$, therefore, by [\cite{fernandez2023operational}, Theorem 3.1]. Using the series formula (Lemma \ref{Seriesformulae}) and the convention \eqref{dertoint}, we have:
\begin{align*}
 {}_{a}\mathbb{D}^{\upalpha,\beta, \gamma, \delta; \theta_{n}}_{nL, x} f(x) &= {}_{a}\mathbb{E}_{x}^{\upalpha, s_{n}, -\gamma \theta_{n}, \delta}\left(\frac{\mathrm{~d}}{\mathrm{~d} x}\right)^n  {}_{a}\mathbb{E}_{x}^{\upalpha,n - \beta - s_{n}, -\gamma(n- \theta_{n}), \delta} f(x)\\
& ={}_{a}\mathbb{E}_x^{\upalpha, s_{n}, -\gamma\theta_{n}, \delta}  ~\frac{\mathrm{~d}^n}{\mathrm{~d} x^n} ~~ {}_{a}\mathbb{E}_x^{\upalpha,n - \beta - s_{n}, -\gamma(n- \theta_{n}), \delta}~ f(x) \\
& =\sum_{i=0}^{\infty} \frac{(-\gamma \theta_{n})_i \delta^i}{i!} ~~ { }^{\text{R}}_{\:a}\mathbb{I}_x^{\upalpha i + s_{n} }\frac{\mathrm{d}^n}{\mathrm{~d} x^n} \sum_{j=0}^{\infty} \frac{(-\gamma(n- \theta_{n}))_j \delta^j}{j!} \times\\
&\quad~{ }_{\:a}^{\text{R}} \mathbb{I}_x^{\upalpha j+ n - \beta - s_{n}} ~f(x)\\
& =\sum_{i=0}^{\infty} \frac{(-\gamma \theta_{n})_i \delta^i}{i!}  {}{}_{\:a}^{\text{R}}\mathbb{D}_x^{-\upalpha i - s_{n} }\frac{\mathrm{d}^n}{\mathrm{~d} x^n} \sum_{j=0}^{\infty} \frac{(-\gamma(n- \theta_{n}))_j \delta^j}{j!}\times\\
&\quad{ }_{\:a}^{\text{R}} \mathbb{D}_x^{-\upalpha j- n +\beta + s_{n}}~ f(x)
\end{align*}
\begin{align*}
 &=\sum_{i=0}^{\infty} \frac{(-\gamma \theta_{n})_i \delta^i}{i!} { }_{\:a}^{\text{R}} \mathbb{D}_x^{-\upalpha i - s_{n}}\sum_{j=0}^{\infty} \frac{(- \gamma(n- \theta_{n}))_j \delta^j}{j!} ~{ }_{\:a}^{\text{R}} \mathbb{D}_x^{-\upalpha j-n + \beta + s_{n} + n }~f(x) \\
&=\sum_{i=0}^{\infty} \frac{(-\gamma\theta_{n})_i \delta^i}{i!} { }_{\:a}^{\text{R}} \mathbb{D}_x^{-\upalpha i - s_{n}}\sum_{j=0}^{\infty} \frac{(-\gamma(n- \theta_{n}))_j \delta^j}{j!} ~{ }_{\:a}^{\text{R}} \mathbb{D}_x^{-\upalpha j + \beta + s_{n} }~ f(x).
\end{align*}
Here, the outer operator ${}_{\:a}^{\text{R}}\mathbb{D}_x^{-\upalpha i - s_n}$ represents a left fractional integral for all values of $i$, whereas the inner operator ${}_{\:a}^{\text{R}}\mathbb{D}^{-\upalpha j + \beta + s_n}$ acts as a left fractional derivative when $j$ is negative and satisfies $-\upalpha j + \beta + s_n \ge 0$, or as a fractional integral for sufficiently large $j$ such that $-\upalpha j + \beta + s_n < 0$. 

This distinction is crucial due to the composition relations given in \eqref{dertoint}-\eqref{rlsemigroup}, which imply that:
\begin{equation*}
{ }_{\:a}^{\text{R}} \mathbb{D}_x^{-\upalpha i - s_{n}}~{ }_{\:a}^{\text{R}} \mathbb{D}_x^{-\upalpha j + \beta + s_{n} }~ f(x) = { }_{\:a}^{\text{R}} \mathbb{D}_x^{-\upalpha i-\upalpha j+\beta} f(x). 
\end{equation*}
if $-\upalpha j + \beta + s_{n}<0$,
\begin{align*}
{}_{a}\mathbb{D}^{\upalpha,\beta, \gamma, \delta; \theta_{n}}_{nL, x} f(x) =& \sum_{i=0}^{\infty} \frac{(-\gamma \theta_{n})_i \delta^i}{i!} { }_{\:a}^{\text{R}} \mathbb{D}_x^{-\upalpha i - s_{n}}\sum_{j=0}^{\infty} \frac{(- \gamma(n- \theta_{n}))_j \delta^j}{j!}\times \\
&{ }_{\:a}^{\text{R}} \mathbb{D}_x^{-\upalpha j + \beta + s_{n} }~ f(x) \\
=& \sum_{i=0}^{\infty} \frac{(-\gamma \theta_{n})_i \delta^i}{i!} \sum_{j=0}^{\infty} \frac{(-\gamma(n- \theta_{n}))_j \delta^j}{j!}~ { }_{\:a}^{\text{R}} \mathbb{D}_x^{-\upalpha i-\upalpha j+\beta} f(x) - \\
&\sum_{k=0}^{M_j-1} \frac{x^{\upalpha i+s_{n} -k-1}}{\Gamma(\upalpha i+ s_{n}-k)} \left({ }_{\:a}^{\text{R}} \mathbb{D}_x^{-\upalpha j+ \beta + s_{n} - k - 1} f\right)(0).
\end{align*}
If $-\upalpha j + \beta + s_{n} \ge 0$, we define $M_j = \lfloor -\upalpha j + \beta + s_{n} \rfloor + 1 \in \mathbb{Z}^{+}$.
By splitting the inner series according to the sign of $-\upalpha j + \beta + s_n$ using the above equation, and denoting the cutoff index by $N$, we obtain
\begin{align*}
{}_{\:a}\mathbb{D}^{\upalpha,\beta, \gamma, \delta; \theta_{n}}_{nL, x} f(x) 
&= \sum_{i=0}^{\infty} \sum_{j=0}^{N} \frac{(-\gamma ~\theta_{n})_i}{i!} \cdot \frac{(-\gamma(n- \theta_{n}))_j}{j!} \, \delta^{i+j}~~{}_{a}^{\textup{R}}{\mathbb{D}}_{x}^{-\upalpha i - \upalpha j + \beta}~ f(x)~-  \\
&\quad \sum_{k=0}^{M_j - 1} \frac{x^{\upalpha i + s_{n} - k - 1}}{\Gamma(\upalpha i + s_{n} - k)} 
\cdot {}_{a}^{\text{R}}{\mathbb{D}}_{x}^{-\alpha j + \beta + s_{n} - k - 1}~ f(0) ~+ \\
&\quad \sum_{i=0}^{\infty} \sum_{j=N+1}^{\infty} \frac{(-\gamma ~\theta_{n})_i}{i!} \cdot \frac{(-\gamma(n- \theta_{n}))_j}{j!} \, \delta^{i+j}~
{}_{a}^{\textup{R}}{\mathbb{D}}_{x}^{-\upalpha i - \upalpha j + \beta} f (x) \\
&= \sum_{i=0}^{\infty} \sum_{j=0}^{\infty} \frac{(-\gamma \theta_{n})_i}{i!} \frac{(-\gamma(n-\theta_{n})_j}{j!} ~ \delta^{i+j} ~{ }_{\:a}^{\text{R}}\mathbb{I}_x^{\upalpha i+\upalpha j-\beta} f(x)  -\\
 &\quad \sum_{i=0}^{\infty} \sum_{j=0}^{\infty} \sum_{k=0}^{M_j - 1} \frac{(-\gamma \theta_{n})_i}{i!} \frac{(- \gamma(n- \theta_{n})_j}{j!}~ \delta^{i+j} ~ \frac{x^{\upalpha i+s_{n}-k-1}}{\Gamma(\upalpha i+s_{n}-k)} \times \\
 &\quad \left({ }_{\:a}^{\text{R}} \mathbb{D}_x^{-\upalpha j+\beta + s_{n}-k-1} f\right)(0).
\end{align*}
The first (double) sum here can be rewritten as follows, 
\begin{equation*}
S_1 =\sum_{i=0}^{\infty} \sum_{j=0}^{\infty} \frac{(-\gamma \theta_{n})_i}{i!} \frac{(-\gamma(n- \theta_{n}))_j}{j!} \delta^{i+j} ~{ }_{\:a}^{\text{R}}\mathbb{I}_x^{\upalpha i+\upalpha j-\beta} f(x).
\end{equation*}
Taking $i+j = k$ and using the Chu-Vandermonde identity:
\begin{align*}
S_1& = \sum_{k=0}^{\infty} \delta^{i+j} \sum_{i+j  =k}^{\infty} \frac{k!}{k!~i!~ j!} (\gamma\theta_{n})_i ~(-\gamma(n- \theta_{n}))_j ~{ }_{\:a}^{\text{R}}\mathbb{I}_x^{\upalpha i+\upalpha j-\beta} f(x)\\
& = \sum_{k=0}^{\infty} \delta^{k} \sum_{k =0}^{\infty} \frac{k!}{k!~i!~ (k-i)!} (\gamma\theta_{n})_i ~(-\gamma(n- \theta_{n}))_{k-i} ~{ }_{\:a}^{\text{R}}\mathbb{I}_x^{\upalpha i+\upalpha j-\beta} f(x)\\
& = \sum_{k=0}^{\infty} \frac{\delta^{k}}{k!} \left(\sum_{k =0}^{\infty} \frac{k!}{i!~ (k-i)!} (\gamma\theta_{n})_i ~(-\gamma(n- \theta_{n}))_{k-i} \right) ~{ }_{\:a}^{\text{R}}\mathbb{I}_x^{\upalpha i+\upalpha j-\beta} f(x) \\
& = \sum_{k=0}^{\infty} \frac{\delta^{k}}{k!}(-\gamma n)_{k} ~{ }_{\:a}^{\text{R}}\mathbb{I}_x^{\upalpha i+\upalpha j-\beta} f(x).   
\end{align*}
Since $f \in{ }^{\text{H}} \Omega_{-1}^\upalpha \subset{ }^{\text{R}} \Omega_{-1}^\upalpha$, this Prabhakar derivative is in the space $C_{-1}$. The second triple summation can be equivalently expressed by moving the infinite sum over $i$ inside, resulting in a Mittag–Leffler power series:
\begin{align*}
S_2 = & \sum_{i=0}^{\infty} \sum_{j=0}^{\infty} \sum_{k=0}^{M_j - 1} \frac{(-\gamma \theta_{n})_i}{i!} \frac{(- \gamma(n- \theta_{n}))_j}{j!} ~\delta^{i+j} \frac{x^{\upalpha i+s_{n}-k-1}}{\Gamma(\upalpha i+s_{n}-k)} \times \\
&\quad{ }_{\:a}^{\text{R}} \mathbb{D}_x^{-\upalpha j+\beta + s_{n}-k-1} f(0)\\
=&\sum_{j=0}^{N} \sum_{k=0}^{M_j - 1} \frac{(-\gamma(n- \theta_{n}))_j ~ \delta ^j}{j!} ~ \sum_{i=0}^{\infty} \frac{(-\gamma \theta_{n})_i ~\delta^{i}}{i!}  \frac{x^{\upalpha i +s_{n}-k-1}}{\Gamma(\upalpha i+s_{n}-k)} \times\\
&\quad { }_{\:a}^{\text{R}} \mathbb{D}_x^{-\upalpha j+\beta + s_{n}-k-1} f(0)\\
=&\sum_{j=0}^{N} \sum_{k=0}^{M_j - 1} \frac{(-\gamma(n- \theta_{n}))_j ~ \delta ^j}{j!}  ~x^{s_{n}-k-1} E_{\upalpha, s_{n}-k}^{-\gamma \theta_{n}} (\delta x^\upalpha) ~{ }_{\:a}^{\text{R}} \mathbb{D}_x^{-\upalpha j+\beta + s_{n}-k-1} f(0).
\end{align*}
To show that this function belongs to the space $C_{-1}$, we consider suppose that for every $k$ such that $s_{n} - k \le 0$, the initial term ${ }_{a}^{\mathrm{R}}\! \mathbb{D}_x^{-\alpha j + \beta + s_{n} - k - 1} f(0)$ must vanish.

If $j \le N$, then $-\alpha j + \beta + s_{n} \ge 0$ holds. Moreover, for $k+1 \le M_{j}$, these two inequalities together ensure that $-\alpha j + \beta + s_{n} - (k+1) \ge 0.$

Here, if we consider $n = 1$ and $s_{1} = \theta_{1}(1 - \beta)$, the fractional order becomes 
$-\alpha j + \beta + \theta_{1}(1 - \beta) - k - 1$, where $\theta_{1} \in [0, 1]$.  We exclude the trivial cases $\theta_{1} = 0$ and $\theta_{1} = 1$, for which the Hilfer fractional derivative reduces to the Riemann–Liouville and Caputo derivatives, and the corresponding results are straightforward a mentioned in \cite{luchko1999operational,hilfer2009operational}. This completes the proof.
\begin{thm}
Let $\upalpha>0, \beta \geq 0, a < b$ with $n \in \mathbb{N}$, and $\gamma, \delta \in \mathbb{C}$. The right $n$th level Prabhakar fractional derivative maps the function space ${}^{\textup{H}} \Omega_{-1}^\upalpha$ into $C_{-1}$:
\begin{equation}
{}_{b}\mathbb{D}_{nL, x}^{\upalpha, \beta, \gamma, \delta;\theta_{n}}: {}^{\textup{H}}\Omega_{-1}^\upalpha \longrightarrow C_{-1}.    
\end{equation}
Moreover, it can be related to the derivative of right Prabhakar Riemann Liouville type by the following relation:
\begin{align}
{}_{b}\mathbb{D}^{\upalpha,\beta, \gamma, \delta; \theta_{n}}_{nL, x} f(x) =&{}_{\:\;\;b}^{\textup{PR}}\mathbb{D}^{\beta}_{x}~ f(x) -\sum_{j=0}^{N} \sum_{k=0}^{M_j - 1} \frac{(-\gamma(n - \theta_{n}))_j ~ \delta^{j}}{j!} \nonumber\\
&\quad\left({ }{}_{\:b}^{\textup{R}} \mathbb{D}_x^{-\upalpha j+\beta + s_{n}-k-1} f\right)(0) x^{s_{n}-k-1} E_{\upalpha, s_{n}-k}^{-\gamma \theta_{n}} (\delta x^\upalpha).
\end{align}
for any function $f \in {}^{\textup{H}} \Omega_{-1}^\upalpha$, where $N=\left\lfloor\frac{\beta+ s_{n}}{\upalpha}\right\rfloor$ and $M_j=\lfloor-\upalpha j+\beta + s_{n}\rfloor+1 \in \mathbb{Z}^{+}$for each $j=0,1, \ldots, N$.
\end{thm}
The proof follows along the same lines as that of Theorem~\ref{thm1111}. 
\end{proof}
\begin{thm}\label{intderirel}
 Let $\upalpha>0, \beta \geq 0, a < b$ with $n \in \mathbb{N}$, and $\gamma, \delta \in \mathbb{C}$. For any $f \in L^1[a, b], a < b$, the left sided $n$th level Prabhakar fractional derivative has the following inversion relation:
\begin{align}\label{mainthem}
&{{}_{\:a}\mathbb{E}}^{\upalpha,\beta,\gamma,\delta}_{x} ~{}_{\:a}\mathbb{D}^{\upalpha,\beta, \gamma, \delta; \theta_{n}}_{nL, x} f(x) = f(x) -\sum_{k=0}^{A - 1} x^{\beta+s_{n}-k-1} E_{\upalpha, \beta+s_{n}-k}^{\gamma(n - \theta_{n})}\left(\delta x^{\upalpha}\right) \times \nonumber\\
& \quad \left(\sum_{j=0}^{B} \frac{(-\gamma(n - \theta_{n}))_{j} \delta^{j}}{j!} \cdot { }_{\:a}^{\textup{R}} \mathbb{D}_{x}^{-\upalpha j+\beta+s_{n}-k-1} f(0)\right),
\end{align}
where $A$ and $B$ are
\begin{equation}\label{AB}
 A = \lfloor\beta+s_{n}\rfloor + 1   \qquad B = \left\lfloor\frac{\beta+ s_{n}-k}{\upalpha}\right\rfloor.
\end{equation}
\end{thm}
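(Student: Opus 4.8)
The plan is to evaluate the left-hand side of \eqref{mainthem} directly from the definition \eqref{basicdefn}, reducing it to a composition of Riemann--Liouville differintegrals to which the modified composition law \eqref{inttermprop} applies, and then to collect all boundary contributions into a single Mittag--Leffler series. First I would insert the definition of ${}_{a}\mathbb{D}^{\upalpha,\beta,\gamma,\delta;\theta_{n}}_{nL,x}$ and fuse the leading Prabhakar integral ${}_{a}\mathbb{E}^{\upalpha,\beta,\gamma,\delta}_{x}$ with the adjacent factor ${}_{a}\mathbb{E}^{\upalpha,s_{n},\cdot,\delta}_{x}$ by means of the semigroup property of Lemma~\ref{semigrouprop}, so that the expression becomes a single Prabhakar integral applied to $\left(\tfrac{d}{dx}\right)^{n}{}_{a}\mathbb{E}^{\upalpha,n-\beta-s_{n},-\gamma(n-\theta_{n}),\delta}_{x}f$. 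Expanding both remaining Prabhakar integrals through the locally uniformly convergent series of Lemma~\ref{Seriesformulae} converts this into a double series in indices $i,j$ whose general term contains ${}_{\:a}^{\textup{R}}\mathbb{I}_{x}^{\upalpha i+\beta+s_{n}}\left(\tfrac{d}{dx}\right)^{n}{}_{\:a}^{\textup{R}}\mathbb{I}_{x}^{\upalpha j+n-\beta-s_{n}}f$. Using \eqref{rlsemigroup} with the convention \eqref{dertoint} I would rewrite $\left(\tfrac{d}{dx}\right)^{n}{}_{\:a}^{\textup{R}}\mathbb{I}_{x}^{\upalpha j+n-\beta-s_{n}}={}_{\:a}^{\textup{R}}\mathbb{D}_{x}^{-\upalpha j+\beta+s_{n}}$, so the sandwiched operator becomes the composition ${}_{\:a}^{\textup{R}}\mathbb{D}_{x}^{-\upalpha i-\beta-s_{n}}\,{}_{\:a}^{\textup{R}}\mathbb{D}_{x}^{-\upalpha j+\beta+s_{n}}$ of two Riemann--Liouville differintegrals.

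Next I would apply the modified composition property \eqref{inttermprop} to each such pair. Exactly as in the proof of Theorem~\ref{thm1111}, the outer operator is always a fractional integral, whereas the inner one is a genuine derivative precisely for the finitely many $j$ with $-\upalpha j+\beta+s_{n}\ge 0$ (that is, $j\le N$) and an integral otherwise; only the former generate boundary terms. This splits the general term into a \emph{main} contribution ${}_{\:a}^{\textup{R}}\mathbb{I}_{x}^{\upalpha(i+j)}f$ and a finite family of \emph{boundary} contributions of the form $\frac{x^{\upalpha i+\beta+s_{n}-k-1}}{\Gamma(\upalpha i+\beta+s_{n}-k)}\bigl[{}_{\:a}^{\textup{R}}\mathbb{D}_{x}^{-\upalpha j+\beta+s_{n}-k-1}f\bigr]_{x=0}$ for $0\le k\le M_{j}-1$, with $M_{j}=\lfloor-\upalpha j+\beta+s_{n}\rfloor+1$.

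To finish, I would handle the two resulting sums separately. For the main part I set $k=i+j$ and invoke the Chu--Vandermonde identity on the Pochhammer coefficients; the upper parameters carried by the two Prabhakar integrals are equal and opposite, so the Vandermonde convolution collapses to $(0)_{k}$, annihilating every term with $k\ge 1$ and leaving exactly $\mathbb{E}^{\upalpha,0,0,\delta}_{x}f=f$. For the boundary part I would first perform the inner summation over $i$, which reassembles a three-parameter Mittag--Leffler function and produces the factor $x^{\beta+s_{n}-k-1}E^{\gamma(n-\theta_{n})}_{\upalpha,\beta+s_{n}-k}(\delta x^{\upalpha})$; interchanging the order of the remaining $(j,k)$ summation then gives the stated form, since the index set $\{\,0\le j\le N,\ 0\le k\le M_{j}-1\,\}$ coincides with $\{\,0\le k\le A-1,\ 0\le j\le B\,\}$ for the cutoffs $A,B$ of \eqref{AB}. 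The main obstacle is the boundary bookkeeping: justifying the term-by-term use of \eqref{inttermprop} inside the doubly infinite series (local uniform convergence, in the spirit of Lemma~\ref{Seriesformulae}), legitimising the interchange of summations, and checking that the potentially singular terms with $\beta+s_{n}-k\le 0$ drop out because the corresponding initial values $\bigl[{}_{\:a}^{\textup{R}}\mathbb{D}_{x}^{-\upalpha j+\beta+s_{n}-k-1}f\bigr](0)$ vanish for admissible $f$ — precisely the mechanism used at the end of the proof of Theorem~\ref{thm1111}.
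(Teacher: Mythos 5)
Your proposal follows essentially the same route as the paper's proof: fuse the applied Prabhakar integral with the leading factor of the derivative via Lemma~\ref{semigrouprop}, expand the remaining Prabhakar operators through Lemma~\ref{Seriesformulae}, rewrite everything as Riemann--Liouville differintegrals, apply \eqref{inttermprop} termwise with the split at the sign change of $-\upalpha j+\beta+s_{n}$, collapse the main double sum by Chu--Vandermonde, and resum the boundary terms into Mittag--Leffler functions with the index bookkeeping of \eqref{AB}. The only structural difference is that the paper factors the computation through the intermediate matched-parameter inversion \eqref{equationfinal} for ${}_{a}\mathbb{E}_{x}^{\upalpha,\beta,\gamma,\delta}\circ{}_{\:\:\:a}^{\textup{PR}}\mathbb{D}_{x}^{\upalpha,\beta,\gamma,\delta}$ and then substitutes parameters, whereas you inline the whole computation; that difference is cosmetic.

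However, your pivotal Chu--Vandermonde step contains a concrete error for $n>1$, and it is worth spelling out because it exposes a gap that the paper's own proof glosses over. After the semigroup fusion, the outer Prabhakar integral is ${}_{a}\mathbb{E}_{x}^{\upalpha,\,\beta+s_{n},\,\gamma(1-\theta_{n}),\,\delta}$, since its Pochhammer parameter is $\gamma+(-\gamma\theta_{n})=\gamma(1-\theta_{n})$, while the inner operator carries $-\gamma(n-\theta_{n})$. These are \emph{not} ``equal and opposite'' unless $n=1$ (or $\gamma=0$): they sum to $-\gamma(n-1)$. Hence the Vandermonde convolution yields $(-\gamma(n-1))_{k}$ rather than $(0)_{k}$, the main sum collapses not to $f$ but to $\sum_{k=0}^{\infty}\frac{(-\gamma(n-1))_{k}\,\delta^{k}}{k!}\,{}_{\:a}^{\textup{R}}\mathbb{I}_{x}^{\upalpha k}f(x)$, and the resummation over $i$ in the boundary part produces the factor $E_{\upalpha,\beta+s_{n}-k}^{\gamma(1-\theta_{n})}(\delta x^{\upalpha})$ rather than the $E_{\upalpha,\beta+s_{n}-k}^{\gamma(n-\theta_{n})}(\delta x^{\upalpha})$ appearing in \eqref{mainthem}. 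The paper hides the same mismatch: it proves \eqref{equationfinal} only for matched parameters and then substitutes it into \eqref{mainresultrel}, where the fused integral has parameter $\gamma(1-\theta_{n})$ but the Prabhakar derivative has $\gamma(n-\theta_{n})$. The natural repair is to apply ${}_{a}\mathbb{E}_{x}^{\upalpha,\beta,n\gamma,\delta}$ on the left of \eqref{mainthem} instead of ${}_{a}\mathbb{E}_{x}^{\upalpha,\beta,\gamma,\delta}$: then the fused parameter is $n\gamma-\gamma\theta_{n}=\gamma(n-\theta_{n})$, it exactly cancels the inner one, and your computation (Vandermonde collapse to $(0)_{k}$, resummation to $E_{\upalpha,\beta+s_{n}-k}^{\gamma(n-\theta_{n})}$) runs verbatim and yields \eqref{mainthem}. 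As written, both your argument and the stated identity are valid only when $n=1$, which is the only case checked by the paper's remark following the theorem.
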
 
\begin{proof}
First, we use the definitions of the left Prabhakar fractional integral and the left $n$th-level Prabhakar fractional derivative, followed by the use of composition relations between Prabhakar operators, specifically, the semigroup property presented in Lemma \ref{semigrouprop} and the differentiation property [\cite{garra2014hilfer}, Eq. (17)], and Lemma \ref{Seriesformulae}. We obtain
\begin{align}\label{mainresultrel}
{}_{\:a}{\mathbb{E}}^{\upalpha,\beta,\gamma,\delta}_{x} ~&({}_{\:a}\mathbb{D}^{\upalpha,\beta, \gamma, \delta; \theta_{n}}_{nL, x} f)(x)\nonumber \\ 
&= {}_{\:a}\mathbb{E}_{x}^{\upalpha, \beta+s_{n}, \gamma(1- \theta_{n}), \delta}\left(\frac{\mathrm{~d}}{\mathrm{~d} x}\right)^n  {}_{\:a}\mathbb{E}_{x}^{\upalpha,n - \beta - s_{n}, -\gamma(n-\theta_{n}), \delta} f(x) \nonumber\\
&={}_{\:a}\mathbb{E}_{x}^{\upalpha, \beta+s_{n}, \gamma(1- \theta_{n}), \delta} ~\frac{\mathrm{~d^n}}{\mathrm{~d} x^n} ~ {}_{\:a}\mathbb{E}_{x}^{\upalpha,n - \beta- s_{n}, -\gamma(n-\theta_{n}), \delta} f(x) \nonumber\\
&={}_{\:a}\mathbb{E}_{x}^{\upalpha, \beta+s_{n}, \gamma(1- \theta_{n}), \delta}~{}_{\:\:\:a}^{\text{PR}}\mathbb{D}^{\upalpha,\beta+s_{n},\gamma(n-\theta_{n}),\delta}_{x} ~f(x). 
\end{align}
Now, it remains to establish an inversion result for the left Prabhakar integral corresponding to the left Riemann–Liouville-type Prabhakar derivative, which generalizes the result presented in [\cite{rani2022mikusinski}, Theorem 3.4]. This will be achieved using the series representations provided in Lemma \ref{Seriesformulae}, as follows:
\begin{align*}
{ }_{a}\mathbb{E}_{x}^{\upalpha, \beta, \gamma, \delta}~&{ }_{\:\:\:a}^{\text{PR}} \mathbb{D}_{x}^{\upalpha, \beta, \gamma, \delta} ~f(x)\\ 
&=\sum_{i=0}^{\infty} \frac{(\gamma)_{i} \delta^{i}}{i!}  {}_{\:a}^{\text{R}} \mathbb{I}_{x}^{\upalpha i+\beta}~\sum_{j=0}^{\infty} \frac{(-\gamma)_{j} \delta^{j}}{j!} ~{ }_{\:a}^{\text{R}} \mathbb{I}_{x}^{\upalpha j-\beta} f(x) \\
& =\sum_{i=0}^{\infty} \sum_{j=0}^{\infty} \frac{(\gamma)_{i}(-\gamma)_{j} \delta^{i+j}}{i!j!} \cdot{ }_{\: a}^{\text{R}} \mathbb{D}_{x}^{-\upalpha i-\beta}~{ }_{\: a}^{\text{R}} \mathbb{D}_{x}^{-\upalpha j+\beta} f(x) .
\end{align*}
Now, we split the inner sum into two parts according to whether $-\upalpha j+\beta$ is positive or negative, with $j= \mathbb{N}$ being the cutoff point, and use the composition relations \eqref{dertoint}-\eqref{inttermprop}:
\begin{align*}
&{}_{\:a}{\mathbb{E}}^{\upalpha,\beta,\gamma,\delta}_{x} ~{}_{\:a}\mathbb{D}^{\upalpha,\beta, \gamma, \delta}_{nL, x} ~f(x)  = \sum_{i=0}^{\infty} \sum_{j=0}^{\infty} \frac{(\gamma)_{i}(-\gamma)_{j} \delta^{i+j}}{i!j!} ~{ }_{\: a}^{\text{R}} \mathbb{D}_{x}^{-\upalpha i-\upalpha j} f(x) -\\
& \sum_{i=0}^{\infty} \sum_{j=0}^{\infty} \frac{(\gamma)_{i}(-\gamma)_{j} \delta^{i+j}}{i!j!} \sum_{k=0}^{M_{j}-1} \frac{x^{\upalpha i+\beta-k-1}}{\Gamma(\upalpha i+\beta-k)} 
{ }_{\: a}^{\text{R}} \mathbb{D}_{x}^{-\upalpha j+\beta-k-1} ~f(0),
\end{align*}
where $M_{j} = \lfloor -\upalpha j + \beta \rfloor + 1$. The first double sum is simplified by the Chu-Vandermonde identity to $f(x)$, so we have
\begin{align*}
{}_{\:a}{\mathbb{E}}^{\upalpha,\beta,\gamma,\delta}_{x} ~{}_{\:a}\mathbb{D}^{\upalpha,\beta, \gamma, \delta}_{nL, x} &f(x)  =f(x)-\sum_{i=0}^{\infty} \sum_{j=0}^{N} \sum_{k=0}^{M_{j}-1} \frac{(\gamma)_{i}(-\gamma)_{j} \delta^{i+j}}{i!j!} ~ \frac{x^{\upalpha i+\beta-k-1}}{\Gamma(\upalpha i+\beta-k)}\times\\
&{ }_{\:a}^{\text{R}} \mathbb{D}^{-\upalpha j+\beta-k-1} f(0) \\
=&f(x)-\sum_{j=0}^{N} \sum_{k=0}^{M_{j}-1} \frac{(-\gamma)_{j} \delta^{j}}{j!} ~\left[\sum_{i=0}^{\infty} \frac{(\gamma)_{i} \delta^{i}}{i!} \cdot \frac{x^{\upalpha i+\beta-k-1}}{\Gamma(\upalpha i+\beta-k)}\right]\times \\
&{ }_{\:a}^{\text{R}} \mathbb{D}^{-\upalpha j+\beta-k-1} f(0)\\
=&f(x)-\sum_{j=0}^{N} \sum_{k=0}^{M_{j}-1} \frac{(-\gamma)_{j} \delta^{j}}{j!}~x^{\beta-k-1} E_{\upalpha, \beta-k}^{\gamma}\left(\delta x^{\upalpha}\right) \times ~\\
&{ }_{\:a}^{\text{R}} \mathbb{D}^{-\upalpha j+\beta-k-1} f(0).
\end{align*}
To rearrange the sums, note that
\begin{equation*}
k \leq M_j - 1
\;\Longleftrightarrow\;
k \leq -\upalpha j + \beta
\;\Longleftrightarrow\;
j \leq \frac{\beta - k}{\upalpha}
\;\Longleftrightarrow\;
j \leq \left\lfloor \frac{\beta - k}{\upalpha} \right\rfloor.
\end{equation*}
That is, the condition $k \leq M_j - 1$ can equivalently be expressed in terms of $j$ as $j \leq \left\lfloor \frac{\beta - k}{\upalpha} \right\rfloor.$
so the result becomes:
\begin{align}\label{equationfinal}
&{}_{a}\mathbb{E}_{x}^{\upalpha, \beta, \gamma, \delta}~{ }_{\:\:\:a}^{\text{PR}} \mathbb{D}_{x}^{\upalpha, \beta, \gamma, \delta} f(x) \nonumber \\
&= f(x) -\sum_{k=0}^{\lfloor\beta\rfloor} x^{\beta-k-1} E_{\upalpha, \beta-k}^{\gamma}\left(\delta x^{\upalpha}\right)\left(\sum_{j=0}^{\left\lfloor\frac{\beta-k}{\upalpha}\right\rfloor} \frac{(-\gamma)_{j} \delta^{j}}{j!} ~~{ }_{\:a}^{\text{R}} \mathbb{D}^{-\upalpha j+\beta-k-1} f(0)\right),
\end{align}
where the expression in brackets is a constant. Substituting \eqref{equationfinal} back into \eqref{mainresultrel}, we obtain the desired result \eqref{mainthem}.
This completes the required proof.
\end{proof}
\begin{thm}
 Let $\upalpha>0, \beta \geq 0, a < b$ with $n \in \mathbb{N}$, and $\gamma, \delta \in \mathbb{C}$. For any $f \in L^1[a, b], a < b$, the right sided $n$th level Prabhakar fractional derivative has the following inversion relation:
\begin{align}
&{{}_{\:b}\mathbb{E}}^{\upalpha,\beta,\gamma,\delta}_{x} ~{}_{\:b}\mathbb{D}^{\upalpha,\beta, \gamma, \delta; \theta_{n}}_{nL, x} f(x) = f(x) -\sum_{k=0}^{A - 1} x^{\beta+s_{n}-k-1} E_{\upalpha, \beta+s_{n}-k}^{\gamma(n - \theta_{n})}\left(\delta x^{\upalpha}\right) \times \nonumber\\
& \quad \left(\sum_{j=0}^{B} \frac{(-\gamma(n - \theta_{n}))_{j} \delta^{j}}{j!} \cdot { }_{\:b}^{\textup{R}} \mathbb{D}_{x}^{-\upalpha j+\beta+s_{n}-k-1} f(0)\right),
\end{align}
where $A$ and $B$ are
\begin{equation}\label{AB}
 A = \lfloor\beta+s_{n}\rfloor + 1   \qquad B = \left\lfloor\frac{\beta+ s_{n}-k}{\upalpha}\right\rfloor.
\end{equation}
\end{thm}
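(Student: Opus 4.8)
The statement is the right-sided counterpart of Theorem~\ref{intderirel}, so the plan is to mirror that proof line by line, replacing each left-sided operator by its right-sided analogue. First I would expand the left-hand side using the definition of the right $n$th-level Prabhakar derivative in \eqref{basicdefn}, and then apply the semigroup property of Lemma~\ref{semigrouprop} together with the right-sided differentiation property [\cite{garra2014hilfer}, Eq.~(17)] to absorb the outer integral, obtaining the reduction
\begin{align*}
{}_{b}\mathbb{E}^{\upalpha,\beta,\gamma,\delta}_{x}\,{}_{b}\mathbb{D}^{\upalpha,\beta,\gamma,\delta;\theta_{n}}_{nL,x} f(x)
= {}_{b}\mathbb{E}_{x}^{\upalpha,\beta+s_{n},\gamma(1-\theta_{n}),\delta}\;{}_{\:\:\:b}^{\textup{PR}}\mathbb{D}^{\upalpha,\beta+s_{n},\gamma(n-\theta_{n}),\delta}_{x} f(x),
\end{align*}
exactly paralleling \eqref{mainresultrel}. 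This reduces the problem to a single inversion identity for the right Prabhakar integral composed with the right Prabhakar Riemann--Liouville derivative of matching parameters.

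The plan for that core identity is to use the right-sided analogues of the series formulae in Lemma~\ref{Seriesformulae}, expanding both operators as power series in $\delta$ built from right Riemann--Liouville differintegrals ${}_{b}^{\textup{R}}\mathbb{I}_{x}$ and ${}_{b}^{\textup{R}}\mathbb{D}_{x}$. I would then regroup the resulting double series according to the sign of $-\upalpha j + \beta$: the nonnegative block produces, via the right-sided composition property (the right analogue of \eqref{inttermprop}), a finite family of boundary terms, whereas the negative block contributes none. Collapsing the leading double sum with the Chu--Vandermonde identity, just as in the left-sided proof, returns $f(x)$, while the boundary contributions reassemble into a Mittag--Leffler series with coefficient $E_{\upalpha,\beta-k}^{\gamma}(\delta x^{\upalpha})$, yielding the right-sided analogue of \eqref{equationfinal}.

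Finally, I would substitute this inversion identity, now with $\beta$ replaced by $\beta+s_{n}$ and $\gamma$ by $\gamma(n-\theta_{n})$, back into the reduction above, and reindex the double sum over $(k,j)$ using the equivalence $k \le M_{j}-1 \Leftrightarrow j \le \lfloor(\beta+s_{n}-k)/\upalpha\rfloor$. This produces precisely the stated cutoffs $A = \lfloor\beta+s_{n}\rfloor+1$ and $B = \lfloor(\beta+s_{n}-k)/\upalpha\rfloor$ and the claimed closed form.

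The step I expect to be the main obstacle is the right-sided composition lemma underpinning the second paragraph: unlike the left-sided case, the right Riemann--Liouville derivative carries the operator $(-\mathrm{d}/\mathrm{d}x)^{n}$, so the $(-1)^{n}$ sign factors and the endpoint origin of the boundary data must be tracked carefully to confirm that the initial-value terms appear in the exact form recorded in the statement. Once this right-sided version of \eqref{inttermprop} is established, every remaining manipulation is formally identical to the proof of Theorem~\ref{intderirel}.
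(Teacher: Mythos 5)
Your proposal is correct and takes essentially the same approach as the paper: the paper's entire proof of this right-sided theorem reads ``Similar to the proof of Theorem~\ref{intderirel} and omitted,'' and your plan is exactly that left-to-right mirroring of the left-sided argument, carried out in more detail than the paper itself provides. Your closing caveat is well placed, since the sign factors coming from $\left(-\frac{\mathrm{d}}{\mathrm{d}x}\right)^{n}$ and the fact that boundary terms of right-sided compositions naturally live at the base point $x=b$ rather than at $x=0$ (where the paper's statement places them) are genuine subtleties that the omitted proof never confronts.
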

\begin{proof}
 Similar to the proof of Theorem \ref{intderirel} and omitted.   
\end{proof}
\begin{rem}
By taking $n = 1$ and $s_{1} = \beta_{1} = \theta_{1}(1 - \beta)$ in equation~\eqref{mainthem}, we recover the result stated in~\cite[Theorem~3.4]{rani2022mikusinski}.
\end{rem}
\section{Operational calculus for the \texorpdfstring{$n$}{nth}th-level Prabhakar fractional derivative}\label{S4}
In this section, we develop a Mikusiński operational calculus for the $n$th-level Prabhakar fractional derivative defined in \eqref{basicdefn}. The fundamental idea of Mikusiński-type operational calculus is to interpret differential, integral, or integro-differential operators as purely algebraic entities. Within this framework, equations involving such operators can be transformed into algebraic equations whose solutions represent generalized solutions of the corresponding differential, integral, or integro-differential equations. In some cases, by means of suitable operational relations, these generalized solutions can further be interpreted as strong (classical) solutions.

The starting point of the Mikusiński-type operational calculi for various classes of fractional operators is the observation that the set $\mathcal{R}_{-1} = \left(C_{-1}(0,+\infty), +, *\right),$ equipped with the usual addition $+$ and the Laplace convolution $*$, forms a commutative ring without zero divisors, as first established in \cite{mikusinski2014operational}.

As in the case of Mikusi\'nski's type operational calculus we have the following theorem:
\begin{thm} \textup{\cite{rani2022mikusinski,mikusinski2014operational}}
The space $C_{-1}$ with the operations of the Laplace convolution $*$ and ordinary addition becomes a commutative ring $(C_{-1}, *, +)$ without divisors of zero.
\end{thm}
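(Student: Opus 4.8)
The plan is to verify the ring axioms in turn, isolating the only genuinely hard ingredient---the absence of zero divisors---which will rest on Titchmarsh's convolution theorem. Recall that the Laplace convolution is
\begin{equation*}
(f * g)(x) = \int_0^x f(x-t)\, g(t)\, \mathrm{d}t, \qquad x > 0,
\end{equation*}
and that by definition every element of $C_{-1}$ has the form $f(x) = x^p f_1(x)$ with $p > -1$ and $f_1 \in C[0,\infty)$. Closure under $+$ is immediate: writing $f = x^p f_1$, $g = x^q g_1$ with $p \le q$, one has $f + g = x^p\bigl(f_1 + x^{q-p} g_1\bigr)$, and $f_1 + x^{q-p} g_1 \in C[0,\infty)$ because $q - p \ge 0$, so $f + g \in C_{-1}$. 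Hence $(C_{-1}, +)$ is an abelian group with neutral element the zero function.

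Next I would establish closure under $*$, which is the first computational step. With $f = x^p f_1$ and $g = x^q g_1$, the substitution $t = xs$ gives
\begin{equation*}
(f * g)(x) = x^{p+q+1} \int_0^1 (1-s)^p s^q\, f_1\!\bigl(x(1-s)\bigr)\, g_1(xs)\, \mathrm{d}s =: x^{p+q+1} h(x).
\end{equation*}
Since $p, q > -1$, the weight $(1-s)^p s^q$ is integrable on $(0,1)$, and $f_1, g_1$ being continuous, hence locally bounded, permits passage to the limit under the integral sign, so $h \in C[0,\infty)$. As $p + q + 1 > -1$, this exhibits $f * g$ as an element of $C_{-1}$, proving closure. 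The remaining algebraic identities are routine: commutativity $f * g = g * f$ follows from the change of variable $t \mapsto x - t$; associativity follows from Fubini's theorem, since both $(f*g)*h$ and $f*(g*h)$ reduce to the same iterated integral over the simplex $\{0 \le t_1 \le t_2 \le x\}$; and two-sided distributivity over $+$ follows from linearity of the integral. Thus $(C_{-1}, *, +)$ is a commutative rng.

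Finally, and this is where the real work lies, I would prove the absence of zero divisors. Suppose $f * g \equiv 0$ on $(0,\infty)$ with $f \not\equiv 0$; the goal is $g \equiv 0$. Every element of $C_{-1}$ is continuous on $(0,\infty)$ and, thanks to $p > -1$, locally integrable on $[0,\infty)$, so on each interval $[0,T]$ Titchmarsh's convolution theorem supplies $\lambda_T, \mu_T \ge 0$ with $\lambda_T + \mu_T \ge T$ such that $f = 0$ a.e.\ on $[0,\lambda_T]$ and $g = 0$ a.e.\ on $[0,\mu_T]$; continuity upgrades these to genuine vanishing on the respective intervals. Let $\lambda = \sup\{x \ge 0 : f \equiv 0 \text{ on } [0,x]\}$, which is finite because $f \not\equiv 0$. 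From $f \equiv 0$ on $[0,\lambda_T]$ we get $\lambda_T \le \lambda$, hence $\mu_T \ge T - \lambda \to \infty$ as $T \to \infty$; therefore $g \equiv 0$ on $[0,\infty)$. The main obstacle is precisely this step: it cannot be reached by the elementary manipulations used for the other axioms and depends essentially on Titchmarsh's theorem, whose own proof rests on complex-analytic, Phragm\'en--Lindel\"of type arguments. I would therefore invoke it as a cited result rather than reprove it, following \cite{mikusinski2014operational}.
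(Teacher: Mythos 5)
Your proof is correct, and there is nothing in the paper to compare it against: the paper states this theorem as a known result, citing \cite{rani2022mikusinski,mikusinski2014operational}, and supplies no proof of its own. Your argument---closure of $C_{-1}$ under convolution via the substitution $t = xs$ producing the exponent $p+q+1 > -1$, the routine ring axioms via Fubini and linearity, and absence of zero divisors via Titchmarsh's convolution theorem applied on each $[0,T]$---is precisely the classical proof found in the cited references, with the genuinely hard ingredient (Titchmarsh) correctly isolated and correctly invoked.
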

This ring can be extended to the field $\mathcal{M}_{-1}$ of convolution quotients by following the lines of the classical Mikusi\'nski operational calculus:
\begin{equation*}
\mathcal{M}_{-1} = C_{-1} \times (C_{-1} \setminus \{0\}),
\end{equation*}
where the equivalence relation $(\sim)$ is defined, as usual, by
\begin{equation}
(f, g) \sim (f_1, g_1) \Leftrightarrow (f * g_1)(t) = (g * f_1)(t).
\end{equation}
For the sake of convenience, the elements of the field $\mathcal{M}_{-1}$ can be formally considered as convolution quotients $f / g$. The operations of addition and multiplication are then defined in $\mathcal{M}_{-1}$ as usual:
\begin{equation}\label{eq:addition}
\frac{f}{g} + \frac{f_1}{g_1} := \frac{f * g_1 + g * f_1}{g * g_1}
\end{equation}
and
\begin{equation}\label{eq:multiplication}
\frac{f}{g} \cdot \frac{f_1}{g_1} := \frac{f * f_1}{g * g_1}.
\end{equation}
\begin{thm}
The space $\mathcal{M}_{-1}$ with the operations of addition \eqref{eq:addition} and multiplication \eqref{eq:multiplication} becomes a commutative field $(\mathcal{M}_{-1}, \cdot, +)$.
\end{thm}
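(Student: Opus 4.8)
The plan is to recognize this as the standard field-of-fractions (convolution-quotient) construction applied to the integral domain established in the preceding theorem. Since $(C_{-1}, *, +)$ is a commutative ring without divisors of zero, every nonzero element is cancellable with respect to $*$; this cancellation property is the single ingredient that drives the entire verification, exactly as in the classical passage from an integral domain to its quotient field. The only genuine wrinkle is that $C_{-1}$ possesses no multiplicative unit, so I must produce the identity of $\mathcal{M}_{-1}$ internally rather than inheriting it.

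First I would confirm that $\sim$ is an equivalence relation. Reflexivity and symmetry follow at once from the commutativity of $*$. For transitivity, suppose $(f,g)\sim(f_1,g_1)$ and $(f_1,g_1)\sim(f_2,g_2)$, so that $f*g_1 = g*f_1$ and $f_1*g_2 = g_1*f_2$. Convolving the first identity with $g_2$ and substituting the second gives $g_1*(f*g_2) = g_1*(g*f_2)$; since $g_1 \neq 0$ and $C_{-1}$ has no zero divisors, cancellation of $g_1$ yields $f*g_2 = g*f_2$, that is, $(f,g)\sim(f_2,g_2)$.

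Next I would check that the operations \eqref{eq:addition} and \eqref{eq:multiplication} are independent of the chosen representatives, again a routine computation resting on cancellation, and then verify the field axioms. Under $+$, the class of $0/g$ (for any $g\neq 0$) is the additive zero, $(-f)/g$ is the additive inverse of $f/g$, and associativity, commutativity and distributivity reduce to identities in $C_{-1}$. The delicate point, which I expect to be the main obstacle, is the multiplicative structure: because $C_{-1}$ has no unit, I would take the multiplicative identity to be the class $e := [g/g]$ and first verify that $[g/g] = [h/h]$ for all nonzero $g,h$ (immediate from commutativity of $*$), so that $e$ is well defined; one then checks $e\cdot(f/g) = (g*f)/(g*g) = f/g$ using cancellation.

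Finally I would identify the nonzero elements and exhibit inverses. An element $f/g$ equals the zero class precisely when $f=0$, since $f*g_1 = 0$ with $g_1\neq 0$ forces $f=0$ in the absence of zero divisors. Hence any nonzero element is represented by a pair with $f\neq 0$, and then $g/f$ is a legitimate element of $\mathcal{M}_{-1}$ whose product with $f/g$ is $(f*g)/(g*f) = e$. Together with commutativity and associativity of $\cdot$, both inherited from $*$, this shows that $(\mathcal{M}_{-1}\setminus\{0\}, \cdot)$ is an abelian group, completing the proof that $\mathcal{M}_{-1}$ is a commutative field.
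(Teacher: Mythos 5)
Your proof is correct: it is exactly the classical Mikusi\'nski field-of-convolution-quotients construction, which is what the paper itself relies on -- the paper states this theorem without proof, as a known consequence of $(C_{-1},*,+)$ being a commutative ring without zero divisors. Your treatment of the two points where the unitless setting matters (transitivity of $\sim$ via cancellation, and producing the identity internally as $e=[g/g]$ rather than inheriting a unit from $C_{-1}$) is precisely the right filling-in of the details the paper leaves to the cited literature.
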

The ring $C_{-1}$ can be embedded into the field $\mathcal{M}_{-1}$ by the map ($\upalpha > 0$):
\begin{equation}\label{eq:7}
f \mapsto \frac{h_\upalpha * f}{h_\upalpha},
\end{equation}
with, $h_\upalpha(x) = x^{\upalpha-1}/\Gamma(\upalpha)$.

In the field $\mathcal{M}_{-1}$, the operation of multiplication with a scalar $\lambda$ from the field $\mathbb{R}$ (or $\mathbb{C}$) can be defined by the relation
\begin{equation}
\lambda \cdot \frac{f}{g} := \frac{\lambda f}{g}, \quad \frac{f}{g} \in \mathcal{M}_{-1}.
\end{equation}

For $\upalpha, \beta, \gamma, \delta \in \mathbb{C}$ with $\operatorname{Re} (\upalpha)>0$ and $\operatorname{Re}(\beta)>0$, we introduce the notation $\mathbb{P}_{\beta, \gamma} = \mathbb{P}_{\upalpha, \beta, \gamma, \delta} \in C_{-1}$. We often suppress the dependence $\upalpha$ and $\delta$ in the notation, simply writing $P_{\beta, \gamma}$, because $\beta$ and $\gamma$ are the most important parameters for the algebraic structure. Namely, for fixed $\upalpha$ and $\delta$, by Lemma \ref{semigrouprop}, we have
\begin{equation*}
\mathbb{P}_{\upalpha, \beta, \gamma, \delta}^n:=\underbrace{\mathbb{P}_{\upalpha, \beta, \gamma, \delta} * \mathbb{P}_{\upalpha, \beta, \gamma, \delta} * \cdots * \mathbb{P}_{\upalpha, \beta, \gamma, \delta}}_{n \text { times }} = \mathbb{P}_{\upalpha, n \beta, n \gamma, \delta} .    
\end{equation*}
Thus, we can also define the fractional powers of this ring element, by
\begin{equation*}
 \mathbb{P}_{\upalpha, \beta, \gamma, \delta}^\kappa = \mathbb{P}_{\upalpha, \kappa \beta, \kappa \gamma, \delta}, \quad \kappa > 0,   
\end{equation*}
and so the set of all elements $\mathbb{P}_{\beta, \gamma} = \mathbb{P}_{\upalpha, \beta, \gamma, \delta}$ forms a multiplicative sub semigroup in $C_{-1}$. In the field $\mathcal{M}_{-1}$, this semigroup extends to a group, which is given by the set of all elements $\mathbb{P}_{\beta, \gamma}$ and their multiplicative inverses
\begin{equation*}
\mathbb{S}_{\beta, \gamma}:=\mathbb{P}_{\beta, \gamma}^{-1} \in \mathcal{M}_{-1}    
\end{equation*}
Now that we have negative powers as well as positive ones, we are able to define fractional powers to any real order of the elements $\mathbb{P}_{\beta, \gamma} \in C_{-1}$ and $\mathbb{S}_{\beta, \gamma} \in \mathcal{M}_{-1}$, namely as follows:
\begin{equation*}
\mathbb{P}_{\beta, \gamma}^\kappa= \begin{cases}\mathbb{P}_{\kappa \beta, \kappa \gamma} \in C_{-1} & \text { if } \kappa>0, \\ I \in \mathcal{M}_{-1} & \text { if } \kappa=0, \\ \mathbb{S}_{\kappa \beta, \kappa \gamma} \in \mathcal{M}_{-1} & \text { if } \kappa<0,\end{cases}   
\end{equation*}
and
\begin{equation*}
\mathbb{S}_{\beta, \gamma}^\kappa= \begin{cases}\mathbb{S}_{\kappa \beta, \kappa \gamma} \in \mathcal{M}_{-1} & \text { if } \kappa>0, \\ I \in \mathcal{M}_{-1} & \text { if } \kappa=0, \\ \mathbb{P}_{\kappa \beta, \kappa \gamma} \in C_{-1} & \text { if } \kappa<0,\end{cases}   
\end{equation*}
where $I$ denotes the multiplicative identity element in the field. The semigroup property of the set of elements $\mathbb{P}_{\beta, \gamma}$ also extends to a group property, so that we have
\begin{equation*}
\mathbb{P}_{\beta_1, \gamma_1} * \mathbb{S}_{\beta_2, \gamma_2}= \begin{cases}\mathbb{P}_{\beta_1-\beta_2, \gamma_1-\gamma_2} \in C_{-1} & \text { if } \beta_1>\beta_2, \\ I \in \mathcal{M}_{-1} & \text { if } \beta_1=\beta_2 \text { and } \gamma_1=\gamma_2, \\ \mathbb{S}_{\beta_2-\beta_1, \gamma_2-\gamma_1} \in \mathcal{M}_{-1} & \text { if } \beta_2>\beta_1,\end{cases}    
\end{equation*}
This study focuses on the singular case, and thus excludes the nonsingular class of Prabhakar operators (i.e., the case where $\beta_{1} = \beta_{2}$ and $\gamma_{1} \neq \gamma_{2}$). For a treatment of the nonsingular case and the properties of the elements $\mathbb{S}_{\beta, \gamma}$, we refer the reader to earlier works on Mikusiński's approach~\cite{rani2022mikusinski}. The term $x^{\beta - 1} E_{\upalpha, \beta}^{\gamma}(\delta x^{\upalpha})$ can be associated with the Prabhakar integral operator ${}_{0}\mathbb{E}_{x}^{\upalpha, \beta, \gamma, \delta}$.
\begin{thm}
Let $\upalpha>0, \beta \geq 0$ with $n=\lfloor\beta+ s_{n} \rfloor+1 \in \mathbb{N}$, and $\gamma, \delta \in \mathbb{C}$. The $n$th level Prabhakar fractional differential operator ${ }_0 \mathbb{D}_{nL, x}^{\upalpha, \beta, \gamma, \delta; \theta_{n}}$ may be represented in the field $\mathcal{M}_{-1}$ in the following form, for $f \in{ }^{\textup{H}} \Omega_{-1}^\upalpha$:
\begin{align}\label{algebricform}
{}_{0}\mathbb{D}^{\upalpha,\beta, \gamma, \delta; \theta_{n}}_{nL, x} f(x)  = \mathbb{S}_{\beta, \gamma} * f - \mathbb{S}_{\beta, \gamma} *  \:\sum_{k=0}^{A-1}~c_{k}~
x^{\beta+s_{n}-k-1} E_{\upalpha, \beta+s_{n}-k}^{\gamma(n - \theta_{n})}\left(\delta x^{\upalpha}\right),
\end{align}
where the constant $c_{k}$ is defined by
\begin{equation*}\label{constant}
c_{k}=\sum_{j=0}^{B} \frac{(-\gamma(n - \theta_{n}))_{j}~ \delta^{j}}{j!} \cdot\left({ }_{\:0}^{\textup{R}} \mathbb{D}^{-\upalpha j+\beta+s_{n}-k-1} f\right)(0),    
\end{equation*}
for each $k= 0,1, \ldots, M-1$. $A$ and $B$ are constants defined in equation \eqref{AB}.
\end{thm}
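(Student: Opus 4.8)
The plan is to obtain the algebraic representation \eqref{algebricform} directly from the inversion relation of Theorem~\ref{intderirel}, by reinterpreting the left Prabhakar integral as a convolution product in the ring $C_{-1}$ and then inverting it inside the field $\mathcal{M}_{-1}$. The key observation is that, for $g \in C_{-1}$, the operator ${}_0\mathbb{E}_x^{\upalpha,\beta,\gamma,\delta}$ acts as Laplace convolution with the ring element $\mathbb{P}_{\beta,\gamma}$ associated to the function $x^{\beta-1} E_{\upalpha,\beta}^{\gamma}(\delta x^{\upalpha})$, that is ${}_0\mathbb{E}_x^{\upalpha,\beta,\gamma,\delta} g = \mathbb{P}_{\beta,\gamma} * g$. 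Since $f \in {}^{\textup{H}}\Omega_{-1}^\upalpha$, Theorem~\ref{thm1111} guarantees that ${}_0\mathbb{D}_{nL,x}^{\upalpha,\beta,\gamma,\delta;\theta_n} f$ lies in $C_{-1}$, so every quantity below is a legitimate element of $\mathcal{M}_{-1}$.

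First I would rewrite the inversion relation \eqref{mainthem} (with $a=0$) in convolution form. Collecting the bracketed initial-value sums into the scalars $c_k$, it becomes
\begin{equation*}
\mathbb{P}_{\beta,\gamma} * {}_0\mathbb{D}_{nL,x}^{\upalpha,\beta,\gamma,\delta;\theta_n} f = f - \sum_{k=0}^{A-1} c_k\, x^{\beta+s_n-k-1} E_{\upalpha,\beta+s_n-k}^{\gamma(n-\theta_n)}(\delta x^{\upalpha}),
\end{equation*}
where $A = \lfloor\beta+s_n\rfloor+1 = n$ under the stated hypothesis and $B = \lfloor(\beta+s_n-k)/\upalpha\rfloor$. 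Each $c_k$ is a finite constant, since membership $f \in {}^{\textup{H}}\Omega_{-1}^\upalpha$ ensures that every Riemann--Liouville initial value ${}_{0}^{\textup{R}}\mathbb{D}^{-\upalpha j+\beta+s_n-k-1}f(0)$ exists, and vanishes whenever the exponent forces it, exactly as verified in the proof of Theorem~\ref{thm1111}.

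Next I would invert $\mathbb{P}_{\beta,\gamma}$ algebraically. Because $C_{-1}$ has no divisors of zero and $\mathbb{P}_{\beta,\gamma}\neq 0$, its image in $\mathcal{M}_{-1}$ admits the multiplicative inverse $\mathbb{S}_{\beta,\gamma} := \mathbb{P}_{\beta,\gamma}^{-1}$ with $\mathbb{S}_{\beta,\gamma} * \mathbb{P}_{\beta,\gamma} = I$. Multiplying the displayed identity by $\mathbb{S}_{\beta,\gamma}$ and using commutativity and associativity of the field operations gives
\begin{equation*}
{}_0\mathbb{D}_{nL,x}^{\upalpha,\beta,\gamma,\delta;\theta_n} f = \mathbb{S}_{\beta,\gamma} * f - \mathbb{S}_{\beta,\gamma} * \sum_{k=0}^{A-1} c_k\, x^{\beta+s_n-k-1} E_{\upalpha,\beta+s_n-k}^{\gamma(n-\theta_n)}(\delta x^{\upalpha}),
\end{equation*}
which is exactly the claimed form \eqref{algebricform}.

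The computational burden is light; the delicacy lies in the algebraic bookkeeping rather than in any estimate. The main obstacle I anticipate is justifying the passage from a functional identity to one in $\mathcal{M}_{-1}$: one must confirm that the Mittag--Leffler terms $x^{\beta+s_n-k-1}E_{\upalpha,\beta+s_n-k}^{\gamma(n-\theta_n)}(\delta x^{\upalpha})$ really belong to $C_{-1}$, so that they embed into the field and convolution with $\mathbb{S}_{\beta,\gamma}$ is meaningful, and that $\mathbb{P}_{\beta,\gamma}$ is nonzero so that $\mathbb{S}_{\beta,\gamma}$ exists. Both points follow from the field construction and the group structure of the elements $\mathbb{P}_{\beta,\gamma}$, $\mathbb{S}_{\beta,\gamma}$ recalled earlier in this section; once they are in place, the single multiplication by $\mathbb{S}_{\beta,\gamma}$ finishes the argument.
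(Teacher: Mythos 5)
Your proposal is correct and follows essentially the same route as the paper's own proof: both start from the inversion relation of Theorem~\ref{intderirel} with $a=0$, read the left Prabhakar integral as convolution with the ring element $\mathbb{P}_{\beta,\gamma}$, and multiply by the field inverse $\mathbb{S}_{\beta,\gamma}$ in $\mathcal{M}_{-1}$ to arrive at \eqref{algebricform}. The only difference is that the paper's proof additionally verifies that the finite sum defining $c_k$ coincides with a single Prabhakar-type initial value, by showing $\left({}_{\:0}^{\textup{R}}\mathbb{D}_x^{-\upalpha j+\beta+s_n-k-1}f\right)(0)=0$ for $j>B$ via the convolution property $C_\mu * C_\nu \subset C_{\mu+\nu+1}$ — a point you instead delegate to the proof of Theorem~\ref{thm1111}, which is acceptable since the statement as written only uses the finite-sum form of $c_k$.
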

\begin{proof}
Let $f \in{ }^{\textup{H}} \Omega_{-1}^\upalpha$. By Theorem \ref{intderirel}, we have the following algebraic relation in the field $\mathcal{M}_{-1}$:
\begin{align*}
\mathbb{P}_{\beta, \gamma} *{}_{0} & \mathbb{D}^{\upalpha,\beta, \gamma, \delta;\theta_{n}}_{nL, x} f(x) 
=f(x)-\sum_{k=0}^{A- 1} x^{\beta+s_{n}-k-1} E_{\upalpha, \beta+s_{n}-k}^{\gamma(n - \theta_{n})}\left(\delta x^{\upalpha}\right) \times \\
&\quad \left[\sum_{j=0}^{B} \frac{(-\gamma(n - \theta_{n}))_{j}~ \delta^{j}}{j!} \cdot\left({ }_{\:0}^{\text{R}} \mathbb{D}^{-\upalpha j+\beta+s_{n}-k-1} f\right)(0)\right],
\end{align*}
Here, the expression inside the brackets represents a constant term. Upon multiplying both sides by the algebraic inverse $\mathbb{S}_{\beta, \gamma}$ of the function $\mathbb{P}_{\beta, \gamma}$, we get:
\begin{align*}
{}_{0}\mathbb{D}^{\upalpha,\beta, \gamma, \delta; \theta_{n}}_{nL, x} f(x)  &= \mathbb{S}_{\beta, \gamma} * f - \mathbb{S}_{\beta, \gamma} *  \:\sum_{k=0}^{A-1} ~\sum_{j=0}^{B} \frac{(-\gamma(n - \theta_{n}))_{j}~ \delta^{j}}{j!} \times \\ 
&\quad \left({ }_{\:0}^{\text{R}} \mathbb{D}^{-\upalpha j+\beta+s_{n}-k-1} f\right)(0)\cdot x^{\beta+s_{n}-k-1} E_{\upalpha, \beta+s_{n}-k}^{\gamma(n - \theta_{n})}\left(\delta x^{\upalpha}\right),
\end{align*}
This is already in the form \eqref{algebricform}, after using the semigroup relation of the $\mathbb{P}$ and $\mathbb{S}$ elements of $\mathcal{M}_{-1}$, noting that the first parameter $\beta$ in $\mathbb{P}_{\beta, \gamma}$ and $\mathbb{S}_{\beta, \gamma}$ should be positive for clarity of the notation.

It remains to demonstrate the equivalence between the two representations of $c_k$, 
that is, to show that each finite sum of Riemann--Liouville (RL) initial values is identical 
to a single Prabhakar initial value.

From the series representation provided in Lemma~\ref{Seriesformulae}, we obtain
\begin{align*}
\left({ }_{\:\:\:0}^{\text{RP}}\mathbb{D}_x^{\upalpha,\, \beta+s_{n}-k-1,\, n\gamma,\, \delta} f\right)(0)
&= \sum_{j=0}^{\infty}
\frac{(-\gamma(n - \theta_{n}))_{j}\, \delta^{j}}{j!}
\left({ }_{\;0}^{\text{R}}\mathbb{D}_x^{-\upalpha j + \beta + s_{n} - k - 1} f\right)(0).
\end{align*}
Therefore, it suffices to verify that
\begin{equation*}
j \geq \left\lfloor \frac{\beta + s_{n} - k}{\upalpha} \right\rfloor +1
\;\Longrightarrow\;
\left({ }_{\:0}^{\text{R}}\mathbb{D}_x^{-\upalpha j + \beta + s_{n} - k - 1} f\right)(0) = 0.
\end{equation*}
This result follows directly from the definition of 
$\left\lfloor \frac{\beta + s_{n} - k}{\upalpha} \right\rfloor$ 
and from the convolution properties of the $C_\mu$ spaces. Indeed,
\begin{align*}
j &\geq \left\lfloor \frac{\beta + s_{n} - k}{\upalpha} \right\rfloor + 1 
\;\Longrightarrow\;
j > \frac{\beta + s_{n} - k}{\upalpha}
\;\Longrightarrow\;
-\upalpha j + \beta + s_{n} - k < 0,
\end{align*}
which implies that the operator 
${ }_0^{\text{R}}\mathbb{D}_x^{-\upalpha j + \beta + s_{n} - k - 1}$ 
is an RL integral of order greater than one. 

It is known from \cite{hadid1996operational} that an RL integral of order greater than $\mu$ 
corresponds to convolution with an element of the space $C_\mu$. 
Since $f \in C_{-1}$ by assumption, the function 
${ }_{\:0}^{\text{R}}\mathbb{D}_x^{-\upalpha j + \beta + s_{n} - k - 1} f$ 
represents the convolution of an element of $C_\mu$ (for some $\mu > 0$) 
with an element of $C_{-1}$. 

Moreover, the fundamental convolution property of the $C_\mu$ spaces 
(see~\cite{dimovski1966operational}),
\begin{equation*}
 C_\mu * C_\nu \subset C_{\mu + \nu + 1},   
\end{equation*}
implies that
\begin{equation*}
{ }_{\:0}^{\text{R}}\mathbb{D}_x^{-\upalpha j + \beta + s_{n} - k - 1} f \in C_\mu   
\end{equation*}
for some $\mu > 0$.
Consequently,
\begin{equation*}
{ }_{\:0}^{\text{R}}\mathbb{D}_x^{-\upalpha j + \beta + s_{n} - k - 1} f(0) = 0,   
\end{equation*}
which establishes the required equivalence.
\end{proof} 
\section{Application}\label{S5}
In this section, we demonstrate several applications of the previously established results by solving fractional differential and integral equations involving Hilfer Prabhakar derivatives, employing the algebraic Mikusiński method. 

Research on equations involving Prabhakar type fractional operators has expanded in several directions. For instance, Karimov \cite{karimov2025mixed} addressed a mixed wave diffusion model governed by a Prabhakar derivative. In another study, Karimov and Turdiev \cite{KarimovTurdiev2023} investigated a subdiffusion equation subject to a Wentzell Neumann boundary condition in the setting of the Hilfer Prabhakar derivative. Turdiev \cite{Turdiev2025} considered a nonlocal formulation for a time fractional hyperbolic type equation incorporating the Prabhakar fractional derivative. Additional related developments include the work of Yuldashova \cite{Turdiev2025} and the analysis by Bulavatsky \cite{bulavatsky2017mathematical}, who modeled fractional filtration dynamics using Hilfer Prabhakar type operators.

\subsection{Generalized Fractional Integro-Differential Equations}
Consider the following initial value problem that involves a $n$th-order Prabhakar fractional derivative:
\begin{equation}\label{ex1}
\begin{array}{ll}
{ }_{0}\mathbb{D}_{nL, x}^{\upalpha, \beta, \gamma, \delta; \theta_{n}} y(x) - ~\lambda y(x) = f(x), & x>0 ; \\
{ }_{\:0}^{\text{R}}\mathbb{D}_x^{-\upalpha j+\beta+s_{n} - k - 1} ~y(0) = a_{k}, &  k \in \mathbb{Z}_0^{+} ~\text {such that } \upalpha j+k \leq \beta,
\end{array}    
\end{equation}
where $\upalpha, \beta, \gamma, \delta$, and $a_{k}$ all the  are fixed constants, with $0 \leq \theta_{n} \leq 1$. 
We first rewrite the problem in an algebraic form, using \eqref{algebricform}
\begin{equation*}
\mathbb{S}_{\beta, \gamma} * y - \mathbb{S}_{\beta, \gamma} * \:\sum_{k=0}^{n-1} c_k ~ x^{\beta + {s}_{n} - k - 1} E_{\upalpha, \beta+ s_{n} -k}^{\gamma(n - \theta_{n})}(\delta x^\upalpha) - \lambda y = f,      
\end{equation*}
where each constant $c_k$ is given in terms of the known initial values $a_{k}$. Rearranging the algebraic equation gives
\begin{equation*}
\left(\mathbb{S}_{\beta, \gamma}-\lambda I\right) * y = f + \mathbb{S}_{\beta, \gamma} *~ \sum_{k=0}^{n-1} c_k ~  x^{\beta + s_{n} - k - 1} E_{\upalpha, \beta+ s_{n}-k}^{\gamma(n - \theta_{n})}(\delta x^\upalpha),    
\end{equation*}
where $I$ is the multiplicative identity element of the field $\mathcal{M}_{-1}$. Consequently, we have
\begin{align*} 
y & =\frac{f}{\mathbb{S}_{\beta, \gamma}-\lambda I}+\sum_{k=0}^{n-1} \frac{c_k ~ x^{\beta + s_{n} - k - 1} E_{\upalpha, \beta+ s_{n}-k}^{\gamma(n - \theta_{n})}(\delta x^\upalpha)}{\mathbb{S}_{\beta, \gamma}-\lambda I} \\ & =\frac{f * \mathbb{P}_{\beta, \gamma}}{I-\lambda \mathbb{P}_{\beta, \gamma}}+\sum_{k=0}^{n-1} \frac{c_k~x^{\beta + s_{n} - k - 1} E_{\upalpha, \beta+ s_{n}-k}^{\gamma(n - \theta_{n})}(\delta x^\upalpha) * \mathbb{P}_{\beta, \gamma}}{I-\lambda P_{\beta, \gamma}} \\ 
& =\frac{f * \mathbb{P}_{\beta, \gamma}}{I-\lambda \mathbb{P}_{\beta, \gamma}}+\sum_{k=0}^{n-1} \frac{c_k~x^{\beta + s_{n} - k - 1} E_{\upalpha, \beta+ s_{n}-k}^{\gamma(n - \theta_{n})}(\delta x^\upalpha) * \mathbb{P}_{\beta, \gamma} }{I-\lambda \mathbb{P}_{\beta, \gamma}} \\ 
& =f * \mathbb{P}_{\beta, \gamma} *\left(I-\lambda \mathbb{P}_{\beta, \gamma}\right)^{-1} + \\
&\qquad \sum_{k=0}^{n-1} c_k~ x^{\beta + s_{n} - k - 1} E_{\upalpha, \beta+ s_{n}-k}^{\gamma(n - \theta_{n})}(\delta x^\upalpha) *\left(I-\lambda \mathbb{P}_{\beta, \gamma}\right)^{-1},
\end{align*}
at this stage all expression is represented in terms of functions belonging to $C_{-1}$. Since, as shown in\textup{[\cite{fernandez2023operational}, Example 3.9]}, we know that
\begin{equation*}
\left(I-\lambda \mathbb{P}_{\beta, \gamma}\right)^{-1}=\sum_{i=0}^{\infty} \lambda^i \mathbb{P}_{i \beta, ~i \gamma},   
\end{equation*}
so
\begin{equation*}
y(x) = 
f * \mathbb{P}_{\beta, \gamma} * \sum_{i=0}^{\infty} \lambda^i \mathbb{P}_{i \beta, ~i \gamma} ~+ \sum_{k=0}^{n-1} c_k~ x^{\beta + s_{n} - k - 1} E_{\upalpha, \beta+ s_{n}-k}^{\gamma(n - \theta_{n})}(\delta x^\upalpha)*\sum_{i=0}^{\infty} \lambda^i \mathbb{P}_{i \beta, ~i \gamma}.
\end{equation*}
This can also be written in the form 
\begin{align}\label{biveriatemlfun}
y(x)~=& \sum_{i=0}^{\infty} \lambda^i \int_0^x(x-\eta)^{(i+1) \beta-1} E_{\upalpha,(i+1) \beta}^{(i+1)\gamma }\left(\delta(x-\eta)^\upalpha\right) f(\eta) \mathrm{d} \eta ~+ \nonumber\\
&\sum_{i=0}^{\infty}\sum_{k=0}^{n-1} ~\lambda^i ~c_k ~\eta^{\beta + i\beta + s_n - k - 1} E_{\alpha, i\beta + \beta + s_n - k}^{(i\gamma + \gamma(n - \theta_{n}))}\left(\delta \eta^\alpha\right).
\end{align}
We have now obtained a strong solution, unique within the space ${ }^{\textup{H}} \Omega_{-1}^\beta$, to the initial value problem under consideration. It is worth noting that, in this case, the constant $c_k$ is defined as
\begin{equation*}
\sum_{j=0}^{B} \frac{(-\gamma(n - \theta_{n}))_{j}~ \delta^{j}}{j!}~a_{k}.  
\end{equation*}
A further definition of a Mittag-Leffler-type function of two variables that has appeared in the literature is given by
\begin{equation*}
E_{2}(x,y)
= E_{2}
\begin{pmatrix}
\gamma_{1},\alpha_{1},\beta_{1};\gamma_{2}, \alpha_{2} & |~x \\
\delta_{1},\alpha_{3},\beta_{2};\delta_{2},\alpha_{4};\delta_{3},\beta_{3} & |~y
\end{pmatrix}    
\end{equation*}
\begin{equation*}
= \sum_{m=0}^{\infty} \sum_{n=0}^{\infty}
\frac{(\gamma_{1})_{\alpha_{1}m + \beta_{1}n} (\gamma_{2})_{\alpha_{2}m}}
{\Gamma(\delta_{1} + \alpha_{3}m + \beta_{2}n)}
\frac{x^{m}}{\Gamma(\delta_{2} + \alpha_{4}m)}
\frac{y^{n}}{\Gamma(\delta_{3} + \beta_{3}n)},    
\end{equation*}
where $\gamma_{1},\gamma_{2},\delta_{1},\delta_{2},\delta_{3},x,y \in \mathbb{C}$ and 
$\min\{\alpha_{1},\alpha_{2},\alpha_{3},\alpha_{4},\beta_{1},\beta_{2},\beta_{3}\} > 0$, for more details, see \cite{garg2013mittag}.

Motivated by the recent emergence of several bivariate analogs of the classical Mittag Leffler function, we now rewrite the equation~\eqref{biveriatemlfun} in a more convenient alternative form. In particular, we note the following representations:
\begin{align*}
 \sum_{i=0}^{\infty} \lambda^{i}& (x-\eta)^{(i+1) \beta-1} \sum_{j=0}^{\infty} \frac{(\gamma + \gamma i)_{j} (\delta(x-\eta)^\upalpha)^j}{j! ~\Gamma(\alpha j + i\beta + \beta)} \\
 & = \sum_{i=0}^{\infty} \sum_{j=0}^{\infty} (x- \eta)^{\beta-1} \frac{\Gamma(\gamma + \gamma i + j)}{j! \, \Gamma(\gamma + \gamma i)} \frac{(\lambda(x- \eta)^{\beta})^i~ (\delta (x - \eta)^\upalpha)^j }{\Gamma(\upalpha j + \beta i + \beta)}.
\end{align*}
This expression is rewritten using the following parameter assignments:
\begin{equation*}
\Longrightarrow\left(
\begin{array}{llll}
\gamma_1 = \gamma, & \upalpha_1 = \gamma, \quad \beta_1 = 1, 
\gamma_2 = 1, & \upalpha_2 = 0, \\
\upalpha_3 = \beta, & \beta_2 = \upalpha, \quad \delta_1 = \beta, 
\delta_2 = \gamma, & \upalpha_4 = \gamma, \quad \delta_3 = 1, \quad \beta_3 = 1
\end{array}
\right).   
\end{equation*}
Using these parameters, the double sum simplifies to the bivariate Mittag-Leffler function \cite{garg2013mittag}:
\begin{equation*}
\Longrightarrow (x-\eta)^{\beta-1}\,\Gamma(\gamma)
\: E_{2}\!\left(
\begin{matrix}
\gamma,\ \gamma,\ 1,\ 1,\ 0\\
\beta,\ \beta,\ \upalpha,\ \gamma,\ \gamma,\ 1,\ 1
\end{matrix}
\ \middle|\ 
\begin{matrix}
\lambda (x-\eta)^{\beta}\\
\delta (x-\eta)^{\upalpha}
\end{matrix}
\right).
\end{equation*}
Similarly for the second term we have
\begin{align*}
\sum_{i=0}^{\infty}& \sum_{k=0}^{n} \lambda^i c_k \eta^{\beta + i\beta + s_n - k - 1} E_{\alpha, i\beta + \beta + s_n - k}^{(i+1) \gamma(n - \theta_{n})}\left(\delta \eta^\alpha\right)\\
& =\sum_{k=0}^{n} c_k  \sum_{i=0}^{\infty} \lambda^i \eta^{\beta + i\beta + s_n - k - 1} \sum_{j=0}^{\infty}
\frac{((\gamma i+\gamma) (n - \theta_{n}))_{j}~ \left(\delta \eta^{\alpha}\right)^{j}}{j!\,\Gamma\!\big(\alpha j + i\beta + \beta + s_n - k\big)} \\
& =\sum_{k=0}^{n} c_k ~ \eta^{\beta + s_n - k - 1} \sum_{i=0}^{\infty} \sum_{j=0}^{\infty} \frac{\Gamma((\gamma i+\gamma) (n - \theta_{n}) + j)}{j !~\Gamma(\gamma + \gamma i)}\times\\
&\quad\frac{(\lambda \eta^\beta)^i~\left(\delta \eta^{\alpha}\right)^{j}}{\Gamma\!\left(\alpha j + i\beta + \beta + s_n - k\right)},
\end{align*}
which also represents a bivariate Mittag-Leffler function \cite{garg2013mittag}
\begin{equation*}
\sum_{k=0}^{n} c_k ~ \eta^{\beta + s_n - k - 1} \sum_{i=0}^{\infty} \sum_{j=0}^{\infty} \frac{\Gamma((\gamma i+\gamma)(n - \theta_{n}) + j)}{j !\Gamma(\gamma + \gamma i)} \frac{(\lambda \eta^\beta)^i\left(\delta \eta^{\alpha}\right)^{j}}{\Gamma\left(\alpha j + i\beta + \beta + s_n - k\right)}.
\end{equation*}
This expression is rewritten using the following parameter assignments:
\begin{equation*}
\left(
\begin{matrix}
\gamma_1 = 0, & \alpha_1 = 0, & \beta_1 = 0, 
\gamma_2 = (\gamma i + \gamma)(n - \theta_{n}), & \alpha_2 = 1, & \beta_2 = \beta, \\
\alpha_3 = \alpha, & \delta_1 = \beta + s_n - k, & \beta_3 = 0, 
\delta_2 = 1, & \alpha_4 = 1, & \delta_3 = 1
\end{matrix}
\right).
\end{equation*}
Using these parameters, the expression simplifies to:
\begin{equation*}
\Longrightarrow \sum_{k=0}^{n} c_k  \eta^{\beta + s_n - k - 1} \Gamma(\gamma)
E_{2}\!\left(
\begin{matrix}
0,\ 0,\ 0,\ (\gamma i + \gamma)(n - \theta_{n}),\ 1\\
\beta + s_{n} -k,\ \alpha,\ \beta,\ 1,\ 1,\ 1,\ 0
\end{matrix}
\ \middle|\ 
\begin{matrix}
\lambda \eta^{\beta}\\
\delta \eta^{\alpha}
\end{matrix}
\right).
\end{equation*}
Using $E_{2}$ function, equation \eqref{biveriatemlfun} can be written as 
\begin{align}
y(x)= & ~\Gamma(\gamma) \int_0^x (x-\eta)^{\beta-1}\,
\: E_{2}\!\left(
\begin{matrix}
\gamma,\ \gamma,\ 1,\ 1,\ 0\\
\beta,\ \beta,\ \upalpha,\ \gamma,\ \gamma,\ 1,\ 1
\end{matrix}
\ \middle|\ 
\begin{matrix}
\lambda (x-\eta)^{\beta}\\
\delta (x-\eta)^{\upalpha}
\end{matrix}
\right) f(\eta) \mathrm{d} \eta \nonumber\\
& + \sum_{k=0}^{n} c_k  \eta^{\beta + s_n - k - 1} \Gamma(\gamma)
E_{2}\!\left(
\begin{matrix}
0,\ 1,\ 0,\ (\gamma i + \gamma)(n - \theta_{n}),\ 1\\
\beta + s_{n} -k,\ \alpha,\ \beta,\ 1,\ 1,\ 1,\ 0
\end{matrix}
\ \middle|\ 
\begin{matrix}
\lambda \eta^{\beta}\\
\delta \eta^{\alpha}
\end{matrix}
\right).
\end{align}
\begin{rem}
When $n =1 $ and $s_{1} = \theta_{1}(1 - \beta)$ is imposed in equation \eqref{biveriatemlfun}, the resulting expression reduces to the solution of the fractional differential equation studied in \textup{\cite{fernandez2023operational}}. This case corresponds to the Hilfer Prabhakar fractional differential equation.
\end{rem}
\subsection{Time fractional heat equation}
In recent years, considerable attention has been devoted to the study of time fractional heat equations, reflecting their important role in mathematical physics, probability theory, and anomalous diffusion modeling; see, for example, \cite{tomovski2020applications,oldham2010fractional,corlay2014multifractional,sokolov2004fractional}.
Motivated by these developments, the present work examines a generalized form of the time fractional heat equation involving $n$th-level Prabhakar fractional derivatives. Within this framework, we derive new analytical results that extend existing approaches and contribute to the broader understanding of fractional diffusion processes.

We consider the following partial differential equation
\begin{equation}\label{pde_corrected}
{ }_{0}\mathbb{D}_{nL, t}^{\alpha, \beta, \gamma, \delta} u(x, t) = \tilde{k}~ \frac{\partial^2}{\partial x^2}u(x, t),    
\end{equation}
with the initial and boundary conditions:
\begin{align}
&\left({ }_{\:0}^{\text{R}}\mathbb{D}_x^{-\upalpha j+\beta+s_{n} - k - 1} u\right)(x, 0) = a_{k}, \qquad -\infty < x < \infty, k = 0, 1, 2 \cdots\\ 
&\quad u(x,t) = 0, \quad \text{when} \quad x \rightarrow \pm \infty, \quad t > 0.
\end{align}

Applying the Fourier transform with respect to the $x$ variable to equation \eqref{pde_corrected}, and denoting $\hat{u}(\omega, t) = \mathcal{F}[u(x,t)] = \int_{-\infty}^{\infty} u(x,t) e^{i\omega x} dx$ and $\hat{f}(\omega) = \mathcal{F}[f(x)]$, we obtain a fractional ordinary differential equation in time:
\begin{equation}\label{fode}
{ }_{0}\mathbb{D}_{nL, t}^{\alpha, \beta, \gamma, \delta} \hat{u}(\omega, t) = -\tilde{k} ~\omega^{2} \hat{u}(\omega, t).
\end{equation}

Using the operational calculus approach, we investigate this fractional ordinary differential equation. The operational form of the equation is:
\begin{align*}
&~\mathbb{S}_{\beta, \gamma} * \hat{u}(\omega, .) = \mathbb{S}_{\beta, \gamma} * \sum_{k=0}^{n} c_{k}~ t^{\beta + {s}_{n} - k -1} E_{\alpha, \beta + S_{n-k}}^{\gamma(n - \theta_{n})}\left(\delta t^{\alpha}\right) = - \tilde{k}~ \omega^{2}~ u(\omega, .) \\
&\left(\mathbb{S}_{\beta, \gamma} + \tilde{k}~ \omega^{2}~\right) \hat{u}(\omega, \cdot) = \mathbb{S}_{\beta, \gamma} * \sum_{k=0}^{n} ~c_{k}~ t^{\beta+s_{n}-k} E_{\alpha, \beta + s_{n}-k}^{\gamma(n - \theta_{n})}\left(\delta t^{\alpha}\right) \\
&~\hat{u}(\omega, \cdot) = \frac{\mathbb{S}_{\beta, r} * \sum_{k=0}^{n} ~c_{k}~ t^{\beta+s_{n-k-1}} ~E_{\alpha, \beta+s_{n}-k}^{\gamma(n - \theta_{n})}\left(\delta t^{\alpha}\right)}{\mathbb{S}_{\beta, \gamma} + \tilde{k} ~\omega^{2}}.
\end{align*}
Here, $*$ denotes the convolution operator and  $\mathbb{S}_{\beta, \gamma}$ represents the operator associated with the corresponding fractional derivative. Where $c_{k}$ is given by 
\begin{equation*}
c_{k} = \sum_{j=0}^{B} \frac{(-\gamma(n - \theta_{n}))_{j} \delta^{j}}{j!} \cdot { }_{\:0}^{\textup{R}} \mathbb{D}_{x}^{-\upalpha j+\beta+s_{n}-k-1} u(x, 0).   
\end{equation*}
Applying techniques analogous to those used in Example~\eqref{ex1}, we obtain the following result.
\begin{equation*}
\hat{u}(\omega, \cdot)= \sum_{i = 0}^{\infty}~(\tilde{k}~ \omega^2)^i~ \sum_{k=0}^{n} ~c_{k}~ t^{i\beta+ \beta + s_{n}-k-1} ~E_{\alpha, i\beta + \beta+s_{n} -k}^{\gamma(i + 1)(n - \theta_{n})}\left(\delta t^{\alpha}\right). 
\end{equation*}
By applying the inverse Fourier transform, we obtain the solution in the spatial domain:
\begin{equation*}
u(x, t) = \mathcal{F}^{-1}\hat{u}(\omega, t) = \frac{1}{2\pi} \int_{-\infty}^{\infty} \hat{u}(\omega, t) e^{i\omega x} d\omega.
\end{equation*}
we obtain the solution of the space-time fractional differential equation:
\begin{align}
&u(x, t) = \nonumber\\
&\frac{1}{2\pi} \int_{-\infty}^{\infty} \left( \sum_{i = 0}^{\infty} (-\tilde{k} \omega^2)^i \sum_{k=0}^{n} c_{k}~ t^{i\beta + \beta + s_{n}-k-1} E_{\alpha, i\beta + \beta+s_{n}-k}^{\gamma(i+1)(n + \theta_{n})}\left(\delta t^{\alpha}\right) \right) e^{- i \omega x} d\omega.
\end{align}
\begin{rem}
By selecting $n = 1$ we get $ s_{1} = \theta(1-\beta)$,  Under these parameter choices, the general solution derived in this application reduces to the fractional heat equation governed by the non-regularized Hilfer Prabhakar derivative, as presented in \textup{\cite{garra2014hilfer}}.
\end{rem}

\bibliography{bibfile}

@article{kilbas2004generalized,
  title={Generalized Mittag-Leffler function and generalized fractional calculus operators},
  author={Kilbas, Anatoly A and Saigo, Megumi and Saxena, Ram K},
  journal={Integral Transforms and Special Functions},
  volume={15},
  number={1},
  pages={31--49},
  year={2004},
  publisher={Taylor \& Francis}
}

@article{prabhakar1971singular,
  title={A singular integral equation with a generalized Mittag Leffler function in the kernel},
  author={Prabhakar, Tilak Raj},
  journal={Yokohama Mathematical Journal},
  volume={19},
  number={1},
  pages={7--15},
  year={1971},
  publisher={Yokohama City University}
}

@article{garra2014hilfer,
  title={Hilfer--Prabhakar derivatives and some applications},
  author={Garra, Roberto and Gorenflo, Rudolf and Polito, Federico and Tomovski, {\v{Z}}ivorad},
  journal={Applied mathematics and computation},
  volume={242},
  pages={576--589},
  year={2014},
  publisher={Elsevier}
}

@article{tomovski2020applications,
  title={Applications of Hilfer-Prabhakar operator to option pricing financial model},
  author={Tomovski, {\v{Z}}ivorad and Dubbeldam, Johan LA and Korbel, Jan},
  journal={Fractional Calculus and Applied Analysis},
  volume={23},
  number={4},
  pages={996--1012},
  year={2020},
  publisher={De Gruyter}
}

@article{dimovski1966operational,
  title={Operational calculus for a class of differential operators},
  author={Dimovski, Ivan},
  journal={CR Acad. Bulg. Sci},
  volume={19},
  number={12},
  pages={1111--1114},
  year={1966}
}

@article{kilbas2006theory,
  title={Theory and applications of fractional differential equations},
  author={Kilbas, AA},
  journal={North-Holland Mathematics Studies},
  volume={204},
  year={2006}
}

@article{oldham2010fractional,
  title={Fractional differential equations in electrochemistry},
  author={Oldham, Keith B},
  journal={Advances in Engineering software},
  volume={41},
  number={1},
  pages={9--12},
  year={2010},
  publisher={Elsevier}
}

@book{samko1993fractional,
  author    = {S. G. Samko and A. A. Kilbas and O. I. Marichev},
  title     = {Fractional Integrals and Derivatives: Theory and Applications},
  publisher = {Gordon and Breach},
  address   = {Yverdon},
  year      = {1993}
}

@article{fernandez2019series,
  title={Series representations for fractional-calculus operators involving generalised Mittag-Leffler functions},
  author={Fernandez, Arran and Baleanu, Dumitru and Srivastava, Hari Mohan},
  journal={Communications in Nonlinear Science and Numerical Simulation},
  volume={67},
  pages={517--527},
  year={2019},
  publisher={Elsevier}
}

@article{hadid1996operational,
  title={An operational method for solving fractional differential equations of an arbitrary real order},
  author={Hadid, SB and Luchko, Yu F},
  journal={Panamer. Math. J},
  volume={6},
  number={1},
  pages={57--73},
  year={1996}
}

@article{luchko1999operational,
  title={Operational method in fractional calculus},
  author={Luchko, Yuri},
  journal={Fractional Calculus and Applied Analysis},
  volume={2},
  number={4},
  pages={463--488},
  year={1999}
}

@article{al2025fractional,
  title={On fractional derivatives of Djrbashian--Nersessian type with the nth-level Sonin kernels and their basic properties},
  author={Al-Refai, Mohammed and Luchko, Yuri},
  journal={Fractional Calculus and Applied Analysis},
  volume={28},
  number={2},
  pages={607--621},
  year={2025},
  publisher={Springer}
}

@article{caputo1967linear,
  title={Linear models of dissipation whose Q is almost frequency independent—II},
  author={Caputo, Michele},
  journal={Geophysical journal international},
  volume={13},
  number={5},
  pages={529--539},
  year={1967},
  publisher={Blackwell Publishing Ltd Oxford, UK}
}

@article{dzherbashian2020fractional,
  title={Fractional derivatives and Cauchy problem for differential equations of fractional order},
  author={Dzherbashian, MM and Nersesian, AB},
  journal={Fractional Calculus and Applied Analysis},
  volume={23},
  number={6},
  pages={1810--1836},
  year={2020},
  publisher={De Gruyter}
}

@article{diethelm2010analysis,
  title={The analysis of fractional differential equations. An application-oriented exposition using differential operators of Caputo type},
  author={Kai, Diethelm},
  journal={Lecture Notes in Mathematics},
  year={2004}
}

@book{hilfer2000applications,
  title={Applications of fractional calculus in physics},
  author={Hilfer, Rudolf},
  year={2000},
  publisher={World scientific}
}

@article{luchko2020fractional,
  title={Fractional derivatives and the fundamental theorem of fractional calculus},
  author={Luchko, Yuri},
  journal={Fractional Calculus and Applied Analysis},
  volume={23},
  number={4},
  pages={939--966},
  year={2020},
  publisher={De Gruyter}
}

@article{rani2022mikusinski,
  title={Mikusi{\'n}ski's operational calculus for Prabhakar fractional calculus},
  author={Rani, Noosheza and Fernandez, Arran},
  journal={Integral Transforms and Special Functions},
  volume={33},
  number={12},
  pages={945--965},
  year={2022},
  publisher={Taylor \& Francis}
}

@article{hilfer2009operational,
  title={Operational method for the solution of fractional differential equations with generalized Riemann-Liouville fractional derivatives},
  author={Hilfer, Rudolf and Luchko, Yury and Tomovski, Zivorad},
  journal={Fract. Calc. Appl. Anal},
  volume={12},
  number={3},
  pages={299--318},
  year={2009}
}

@article{luchko2021general,
  title={General fractional integrals and derivatives of arbitrary order},
  author={Luchko, Yuri},
  journal={Symmetry},
  volume={13},
  number={5},
  pages={755},
  year={2021},
  publisher={MDPI}
}

@article{fernandez2023operational,
  title={An operational calculus approach to Hilfer--Prabhakar fractional derivatives},
  author={Fernandez, Arran and Rani, Noosheza and Tomovski, {\v{Z}}ivorad},
  journal={Banach Journal of Mathematical Analysis},
  volume={17},
  number={2},
  pages={33},
  year={2023},
  publisher={Springer}
}

@book{mikusinski2014operational,
  title={Operational calculus},
  author={Mikusinski, Jan},
  volume={109},
  year={2014},
  publisher={Elsevier}
}

@article{corlay2014multifractional,
	title={Multifractional stochastic volatility models},
	author={Corlay, Sylvain and Lebovits, Joachim and V{\'e}hel, Jacques L{\'e}vy},
	journal={Mathematical Finance},
	volume={24},
	number={2},
	pages={364--402},
	year={2014},
	publisher={Wiley Online Library}
}

@article{sokolov2004fractional,
  title={Fractional diffusion equation for a power-law-truncated L{\'e}vy process},
  author={Sokolov, IM and Chechkin, AV and Klafter, J},
  journal={Physica A: Statistical Mechanics and its Applications},
  volume={336},
  number={3-4},
  pages={245--251},
  year={2004},
  publisher={Elsevier}
}

@article{tarasov2022general,
  title={General non-local electrodynamics: Equations and non-local effects},
  author={Tarasov, Vasily E},
  journal={Annals of Physics},
  volume={445},
  pages={169082},
  year={2022},
  publisher={Elsevier}
}

@article{agrawal2012some,
	title={Some generalized fractional calculus operators and their applications in integral equations},
	author={Agrawal, Om P},
	journal={Fractional Calculus and Applied Analysis},
	volume={15},
	number={4},
	pages={700--711},
	year={2012},
	publisher={Springer}
}

@book{kiryakova1993generalized,
	title={Generalized fractional calculus and applications},
	author={Kiryakova, Virginia S},
	year={1993},
	publisher={CRC press}
}

@article{fernandez2019fractional,
	title={On fractional calculus with general analytic kernels},
	author={Fernandez, Arran and {\"O}zarslan, Mehmet Ali and Baleanu, Dumitru},
	journal={Applied Mathematics and Computation},
	volume={354},
	pages={248--265},
	year={2019},
	publisher={Elsevier}
}

@article{giusti2020practical,
  title={A practical guide to Prabhakar fractional calculus},
  author={Giusti, Andrea and Colombaro, Ivano and Garra, Roberto and Garrappa, Roberto and Polito, Federico and Popolizio, Marina and Mainardi, Francesco},
  journal={Fractional Calculus and Applied Analysis},
  volume={23},
  number={1},
  pages={9--54},
  year={2020},
  publisher={De Gruyter}
}

@article{polito2015some,
  title={Some properties of Prabhakar-type fractional calculus operators},
  author={Polito, Federico and Tomovski, Zivorad},
  journal={arXiv preprint arXiv:1508.03224},
  year={2015}
}

@article{colombaro2018storage,
  title={Storage and dissipation of energy in Prabhakar viscoelasticity},
  author={Colombaro, Ivano and Giusti, Andrea and Vitali, Silvia},
  journal={Mathematics},
  volume={6},
  number={2},
  pages={15},
  year={2018},
  publisher={MDPI}
}

@article{luchko20221st,
  title={The 1st level general fractional derivatives and some of their properties},
  author={Luchko, Yuri},
  journal={Journal of Mathematical Sciences},
  volume={266},
  number={5},
  pages={709--722},
  year={2022},
  publisher={Springer}
}

@article{luchko20231st,
  title={On the 1st-level general fractional derivatives of arbitrary order},
  author={Luchko, Yuri},
  journal={Fractal and Fractional},
  volume={7},
  number={2},
  pages={183},
  year={2023},
  publisher={MDPI}
}

@book{yosida2012operational,
  title={Operational calculus: a theory of hyperfunctions},
  author={Yosida, K{\^o}saku},
  volume={55},
  year={2012},
  publisher={Springer Science \& Business Media}
}

@article{ditkin1963theory,
  title={On the theory of the operational calculus for the Bessel equation},
  author={Ditkin, Vitalii Arsen'evich and Prudnikov, Anatolii Platonovich},
  journal={USSR Computational Mathematics and Mathematical Physics},
  volume={3},
  number={2},
  pages={296--315},
  year={1963},
  publisher={Elsevier}
}

@article{luchko1994operational,
  title={An operational method for solving some classes of integro-differential equations},
  author={Luchko, Yurii Fedorovich and Yakubovich, Semen Borisovich},
  journal={Differentsial'nye Uravneniya},
  volume={30},
  number={2},
  pages={269--280},
  year={1994},
  publisher={Russian Academy of Sciences, Branch of Informatics, Computer Equipment and~…}
}

@article{luchko1995exact,
  title={The exact solution of certain differential equations of fractional order by using operational calculus},
  author={Luchko, Yu F and Srivastava, HM},
  journal={Computers \& Mathematics with Applications},
  volume={29},
  number={8},
  pages={73--85},
  year={1995},
  publisher={Elsevier}
}

@article{luchko2022fractional,
  title={Fractional differential equations with the general fractional derivatives of arbitrary order in the Riemann--Liouville sense},
  author={Luchko, Yuri},
  journal={Mathematics},
  volume={10},
  number={6},
  pages={849},
  year={2022},
  publisher={MDPI}
}

@article{luchko2021operational,
  title={Operational calculus for the general fractional derivative and its applications},
  author={Luchko, Yuri},
  journal={Fractional Calculus and Applied Analysis},
  volume={24},
  number={2},
  pages={338--375},
  year={2021},
  publisher={De Gruyter}
}

@article{al2022operational,
  title={Operational calculus for the general fractional derivatives of arbitrary order},
  author={Al-Kandari, Maryam and Hanna, Latif AM and Luchko, Yuri},
  journal={Mathematics},
  volume={10},
  number={9},
  pages={1590},
  year={2022},
  publisher={MDPI}
}

@article{garg2013mittag,
  title={A Mittag-Leffler-type function of two variables},
  author={Garg, Mridula and Manohar, Pratibha and Kalla, SL},
  journal={Integral Transforms and Special Functions},
  volume={24},
  number={11},
  pages={934--944},
  year={2013},
  publisher={Taylor \& Francis}
}

@article{karimov2025mixed,
  title={Mixed wave-diffusion-wave equation: solvability of an initial-boundary problem},
  author={Karimov, Erkinjon and Toshpulatov, Muzaffar},
  journal={Gulf Journal of Mathematics},
  volume={20},
  pages={120--139},
  year={2025}
}

@inproceedings{KarimovTurdiev2023,
  author    = {E.T. Karimov and Kh.N. Turdiev},
  title     = {Boundary-value problem with Wentzell--Neumann condition for a subdiffusion equation involving Hilfer--Prabhakar fractional derivative},
  booktitle = {Proceedings of the VII International Scientific Conference ``Nonlocal Boundary Value Problems and Related Problems of Mathematical Biology, Informatics and Physics''},
  address   = {Nalchik, Russia},
  date      = {2023-12-04/2023-12-08},
  pages     = {340--342},
  year      = {2023}
}

@article{Turdiev2025,
  author    = {Kh. N. Turdiev},
  title     = {Nonlocal problem for the time-fractional hyperbolic-type equation with the Prabhakar fractional derivative},
  journal   = {Vestnik KRAUNC. Fiz.-mat. nauki},
  year      = {2025},
  volume    = {52},
  number    = {3},
  pages     = {53--62},
  doi       = {10.26117/2079-6641-2025-52-3-53-62}
}

@article{bulavatsky2017mathematical,
  title={Mathematical modeling of fractional differential filtration dynamics based on models with Hilfer--Prabhakar derivative},
  author={Bulavatsky, VM},
  journal={Cybernetics and Systems Analysis},
  volume={53},
  number={2},
  pages={204--216},
  year={2017},
  publisher={Springer}
}

@article{anastassiou2021foundations,
  title={Foundations of generalized Prabhakar-Hilfer fractional calculus with applications},
  author={Anastassiou, George A},
  journal={Cubo (Temuco)},
  volume={23},
  number={3},
  pages={423--440},
  year={2021},
  publisher={SciELO Chile}
}
\bibliographystyle{agsm}
\end{document}